\newtheorem{theorem}{Theorem}
\newtheorem{algorithm}[theorem]{Algorithm}
\newtheorem{lemma}[theorem]{Lemma}
\newenvironment{problem}{\pb\rm}{\endpb}
\newenvironment{remark}{\rem\rm}{\endrem}
\newcounter{unnumber}
\newenvironment{proof}{\prf\rm}{\hfill{$\blacksquare$}\endprf}
\newcommand{\R}{\mathbb{R}}%
\newcommand{\N}{\mathbb{N}}%
\newcommand{\ol}{\overline}%
\newcommand{\bx}{\ensuremath{\overline{x}}}
\newcommand{\bp}{\ensuremath{\overline{p}}}
\newcommand{\bv}{\ensuremath{\overline{v}}}
\newcommand{\bu}{\ensuremath{\overline{u}}}
\renewcommand{\>}{\right\rangle}
\newcommand{\<}{\left\langle}
\newcommand{\f}{\ensuremath{\boldsymbol}}
\newcommand{\fx}{\ensuremath{\boldsymbol{x}}}
\newcommand{\fbx}{\ensuremath{\boldsymbol{\overline{x}}}}
\newcommand{\fy}{\ensuremath{\boldsymbol{y}}}
\newcommand{\fz}{\ensuremath{\boldsymbol{z}}}
\newcommand{\fK}{\ensuremath{\boldsymbol{\mathcal{K}}}}
\newcommand{\h}{\ensuremath{\mathcal{H}}}
\newcommand{\g}{\ensuremath{\mathcal{G}}}
\renewcommand{\Box}{\ensuremath{\mbox{\small$\,\square\,$}}}
\DeclareMathOperator*\dom{dom}%
\DeclareMathOperator*\B{\overline{\R}}%
\DeclareMathOperator*\gr{Gr}%
\DeclareMathOperator*\ran{ran}%
\DeclareMathOperator*\id{Id}%
\DeclareMathOperator*\prox{prox}%
\DeclareMathOperator*\zer{zer}
\DeclareMathOperator*\fix{Fix}
\title{Inertial Douglas-Rachford splitting for monotone inclusion problems}
\author{Radu Ioan Bo\c{t} \thanks{University of Vienna, Faculty of Mathematics, Oskar-Morgenstern-Platz 1, A-1090 Vienna, Austria, 
email: radu.bot@univie.ac.at. Research partially supported by DFG (German Research Foundation), project BO 2516/4-1.} \and 
Ern\"{o} Robert Csetnek \thanks {University of Vienna, Faculty of Mathematics, Oskar-Morgenstern-Platz 1, A-1090 Vienna, Austria, 
email: ernoe.robert.csetnek@univie.ac.at. Research supported by DFG (German Research Foundation), project BO 2516/4-1.} 
 \and Christopher Hendrich \thanks{Department of Mathematics, Chemnitz University of Technology, D-09107 Chemnitz, Germany, e-mail: 
 christopher.hendrich@mathematik.tu-chemnitz.de. Research supported by a Graduate Fellowship of the Free State Saxony, Germany.}}
\begin{document}
\maketitle

\noindent \textbf{Abstract.} We propose an inertial Douglas-Rachford splitting algorithm for finding the set of zeros of the sum 
of two maximally monotone operators in Hilbert spaces and investigate its convergence properties. To this end we formulate first the inertial version of the Krasnosel'ski\u{\i}--Mann algorithm for approximating the set 
of fixed points of a nonexpansive operator, for which we also provide an exhaustive convergence analysis. By using  a product space approach we employ these results to the solving of monotone inclusion problems 
involving linearly composed and parallel-sum type  operators and provide in this way iterative schemes where each of the maximally monotone mappings is accessed separately via its resolvent. We consider also the 
special instance of solving a primal-dual pair of nonsmooth convex optimization problems and illustrate the theoretical results via some numerical experiments in clustering and location theory. 
\vspace{1ex}

\noindent \textbf{Key Words.} inertial splitting algorithm, Douglas--Rachford splitting, Krasnosel'ski\u{\i}--Mann algorithm, 
primal-dual algorithm, convex optimization \vspace{1ex}

\noindent \textbf{AMS subject classification.} 47H05, 65K05, 90C25

\section{Introduction and preliminaries}\label{sec-intr}

The problem of approaching the set of zeros of maximally monotone operators by means of splitting iterative algorithms, where  each of the operators involved is evaluated separately, either via its resolvent in the set-valued 
case, or by means of the operator itself in the single-valued case, continues to be a very attractive research area. This is due to its applicability in the context of solving real-life problems which can 
be modeled as nondifferentiable convex optimization problems, like those arising in image processing, signal recovery, support vector machines classification, location theory, clustering, network communications, etc.

In this manuscript we focus our attention on the \textit{Douglas-Rachford algorithm} which approximates the set of zeros of the sum of 
two maximally monotone operators. Introduced in \cite{dor} in the particular context of solving matrix equations, its convergence properties have been investigated also in \cite{lions-mercier}. One of the methods 
for proving the convergence of the classical Douglas--Rachford splitting method is by treating it as a particular case of the  Krasnosel'ski\u{\i}--Mann algorithm designed for finding fixed points of nonexpansive operators 
(see \cite{bauschke-book}, \cite{combettes}). This approach has the advantage to allow the inclusion of relaxation parameters in the update rules of the iterates. 

In this paper we introduce and investigate the convergence properties of an inertial Douglas-Rachford splitting algorithm. Inertial proximal methods go back to \cite{alvarez2000, alvarez-attouch2001}, 
where it has been noticed that the discretization of a differential system of second-order in time gives rise to a generalization of the classical proximal-point algorithm for finding the zeros of a maximally monotone operator 
(see \cite{rock-prox}), nowadays called \textit{inertial proximal-point algorithm}. One  of the main features of the inertial proximal algorithm is that the next iterate is defined by making use of the last two iterates. 
Since its introduction, one can notice an increasing interest in algorithms having this particularity, see \cite{alvarez2004, alvarez-attouch2001,
cabot-frankel2011, mainge2008, mainge-moudafi2008, moudafi-oliny2003, b-c-inertial, att-peyp-red}. 

In order to prove the convergence of the proposed inertial Douglas-Rachford algorithm we formulate first the inertial version of the 
Krasnosel'ski\u{\i}--Mann algorithm for approximating the set of fixed points of a nonexpansive operator and investigate its convergence properties, the obtained results having their own interest. The convergence of the inertial Douglas-Rachford scheme is then derived by applying the inertial version 
of the Krasnosel'ski\u{\i}--Mann algorithm to the composition of the reflected resolvents of the maximally monotone 
operators involved in the monotone inclusion problem. 

The second major aim of the paper is to make use of these results when formulating an \textit{inertial Douglas-Rachford 
primal-dual algorithm} designed to solve monotone inclusion problems involving linearly composed and parallel-sum type operators. Let us mention that the classical Douglas-Rachford algorithm cannot handle monotone inclusion problems where some of the 
set-valued mappings involved are composed with linear continuous operators, since in general there is no closed form for 
the resolvent of the composition. The same applies in the case of monotone inclusion problems involving parallel-sum type  operators. \textit{Primal-dual methods} are modern techniques which overcome this drawback, 
having as further highlights their full decomposability and the fact that they are able to solve concomitantly a primal monotone inclusion problem and 
its dual one in the sense of Attouch-Th\'{e}ra \cite{at-th} (see \cite{b-c-h1, b-c-h2, b-h, b-h2, br-combettes, combettes-pesquet, vu, ch-pck, condat2013}  for more details). 

The structure of the paper is the following. In the remainder of this section we recall some elements of the theory of maximal monotone operators and some convergence results needed in the paper.
The next section contains the inertial type of the Krasnosel'ski\u{\i}--Mann scheme followed by the inertial 
Douglas-Rachford algorithm with corresponding weak and strong convergence results. In Section \ref{sec3} we formulate the  inertial 
Douglas-Rachford primal-dual splitting algorithm and study its convergence, while in Section \ref{sec4} we make use of this iterative scheme when solving primal-dual pairs of convex optimization problems. We close the paper by illustrating the theoretical results via some numerical experiments in clustering and location theory. 

For the notions and results presented as follows we refer the reader to \cite{bo-van, b-hab, bauschke-book, EkTem, simons, Zal-carte}. Let $\N= \{0,1,2,...\}$ be the set of nonnegative integers.
Let ${\cal H}$ be a real Hilbert space with \textit{inner product} $\langle\cdot,\cdot\rangle$ and associated \textit{norm} $\|\cdot\|=\sqrt{\langle \cdot,\cdot\rangle}$.
The symbols $\rightharpoonup$ and $\rightarrow$ denote weak and strong convergence, respectively. The following identity will be used  
several times in the paper (see \cite[Corollary 2.14]{bauschke-book}):
\begin{equation}\label{id-hilb} \|\alpha x+(1-\alpha)y\|^2+\alpha(1-\alpha)\|x-y\|^2=\alpha\|x\|^2+(1-\alpha)\|y\|^2 \ \forall \alpha\in\R
 \ \forall (x,y)\in{\cal H}\times{\cal H}.\end{equation}

When ${\cal G}$ is another Hilbert space and $K:{\cal H} \rightarrow {\cal G}$ a linear continuous operator,
then the \textit{norm} of $K$ is defined as $\|K\| = \sup\{\|Kx\|: x \in {\cal H}, \|x\| \leq 1\}$,
while $K^* : {\cal G} \rightarrow {\cal H}$, defined by $\langle K^*y,x\rangle = \langle y,Kx \rangle$ for all $(x,y) \in {\cal H} \times {\cal G}$, denotes the \textit{adjoint operator} of $K$.

For an arbitrary set-valued operator $A:{\cal H}\rightrightarrows {\cal H}$ we denote by $\gr A=\{(x,u)\in {\cal H}\times {\cal H}:u\in Ax\}$ its \emph{graph}, by $\dom A=\{x \in {\cal H} : Ax \neq \varnothing\}$
its \emph{domain}, by $\ran A=\cup_{x\in{\cal{H}}} Ax$ its {\it range} and by $A^{-1}:{\cal H}\rightrightarrows {\cal H}$ its
\emph{inverse operator}, defined by $(u,x)\in\gr A^{-1}$ if and only if $(x,u)\in\gr A$.
We use also the notation $\zer A=\{x\in{\cal{H}}:0\in Ax\}$ for the \emph{set of zeros} of $A$. We say that $A$ is \emph{monotone}
if $\langle x-y,u-v\rangle\geq 0$ for all $(x,u),(y,v)\in\gr A$. A monotone operator $A$ is said to be \emph{maximally monotone}, if there exists no proper monotone extension of the graph of $A$ on ${\cal H}\times {\cal H}$.
The \emph{resolvent} of $A$ is
$$J_A:{\cal H} \rightrightarrows {\cal H}, J_A=(\id\nolimits_{{\cal H}}+A)^{-1},$$ 
and the \textit{reflected resolvent} of $A$ is
$$R_A:{\cal H} \rightrightarrows {\cal H}, R_A=2J_A-\id\nolimits_{{\cal H}},$$
where $\id_{{\cal H}} :{\cal H} \rightarrow {\cal H}, \id_{\cal H}(x) = x$ for all $x \in {\cal H}$, is the \textit{identity operator} on ${\cal H}$. 
Moreover, if $A$ is maximally monotone, then $J_A:{\cal H} \rightarrow {\cal H}$ is single-valued and maximally monotone
(see \cite[Proposition 23.7 and Corollary 23.10]{bauschke-book}). For an arbitrary $\gamma>0$ we have (see \cite[Proposition 23.2]{bauschke-book})
$$p\in J_{\gamma A}x \ \mbox{if and only if} \ (p,\gamma^{-1}(x-p))\in\gr A$$
and (see \cite[Proposition 23.18]{bauschke-book})
\begin{equation}\label{j-inv-op}
J_{\gamma A}+\gamma J_{\gamma^{-1}A^{-1}}\circ \gamma^{-1}\id\nolimits_{{\cal H}}=\id\nolimits_{{\cal H}}.
\end{equation}

Further, let us mention some classes of operators that are used in the paper. The operator $A$ is said to be \textit{uniformly monotone} if there exists an increasing function
$\phi_A : [0,+\infty) \rightarrow [0,+\infty]$ that vanishes only at $0$, and
\begin{equation}\label{unif-mon-def}\langle x-y,u-v \rangle \geq \phi_A \left( \| x-y \|\right) \ \forall (x,u),(y,v) \in \gr A.\end{equation} 
Prominent representatives 
of the class of uniformly monotone operators are the strongly monotone operators.
Let $\gamma>0$ be arbitrary. We say that  $A$ is \textit{$\gamma$-strongly monotone}, if $\langle x-y,u-v\rangle\geq \gamma\|x-y\|^2$ for all 
$(x,u),(y,v)\in\gr A$.

We consider also the class of nonexpansive operators. An operator $T:D\rightarrow {\cal H}$, where $D\subseteq {\cal H}$ is nonempty, is said to be 
\textit{nonexpansive}, if $\|Tx-Ty\|\leq\|x-y\|$ for all $x,y\in D$. We use the notation $\fix T=\{x\in D:Tx=x\}$ for the set of 
\textit{fixed points} of $T$. Let us mention that the resolvent and the reflected resolvent of a maximally monotone operator are both nonexpansive (see 
\cite[Corollary 23.10]{bauschke-book}). 

The following result, which is a consequence of the demiclosedness principle (see \cite[Theorem 4.17]{bauschke-book}), 
will be useful in the proof of the convergence of the inertial version of the Krasnosel'ski\u{\i}--Mann algorithm. 

\begin{lemma}\label{x_n-T-fix}(see \cite[Corollary 4.18]{bauschke-book}) Let $D\subseteq {\cal H}$ be nonempty closed and convex, $T:D\rightarrow {\cal H}$ be nonexpansive and
let $(x_n)_{n\in\N}$ be a sequence in $D$ and $x\in {\cal H}$ such that $x_n\rightharpoonup x$ and $Tx_n-x_n\rightarrow 0$ as $n\rightarrow +\infty$. 
Then $x\in\fix T$.
\end{lemma}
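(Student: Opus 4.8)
The plan is to give a direct, self-contained argument based only on nonexpansiveness and the defining inequality of weak convergence; alternatively one could simply invoke the demiclosedness of $\id_{\cal H}-T$ at $0$ from \cite[Theorem 4.17]{bauschke-book}. First I would check that $Tx$ even makes sense. Since $D$ is closed and convex it is weakly sequentially closed, so $x_n\in D$ together with $x_n\rightharpoonup x$ forces $x\in D$. It then remains to prove $Tx=x$.

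Next I would introduce $e_n:=x_n-Tx_n$, so that $e_n\rightarrow 0$ by assumption, and write $Tx_n-Tx=(x_n-Tx)-e_n$. Squaring and invoking the nonexpansiveness bound $\|Tx_n-Tx\|\le\|x_n-x\|$ gives
\begin{equation*}
\|x_n-Tx\|^2\le\|x_n-x\|^2+2\langle x_n-Tx,e_n\rangle-\|e_n\|^2 .
\end{equation*}
A weakly convergent sequence is bounded, so $(x_n-Tx)_{n\in\N}$ is bounded and, since $e_n\rightarrow 0$, the last two terms vanish in the limit. The decisive idea is to expand the same quantity $\|x_n-Tx\|^2$ around $x$ instead,
\begin{equation*}
\|x_n-Tx\|^2=\|x_n-x\|^2+2\langle x_n-x,x-Tx\rangle+\|x-Tx\|^2 ,
\end{equation*}
and to play the two expressions off against each other.

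Combining the two displays and cancelling the common term $\|x_n-x\|^2$ leaves
\begin{equation*}
2\langle x_n-x,x-Tx\rangle+\|x-Tx\|^2\le 2\langle x_n-Tx,e_n\rangle-\|e_n\|^2 .
\end{equation*}
This is where weak convergence is indispensable: since $x-Tx$ is a fixed vector and $x_n\rightharpoonup x$, the cross term $\langle x_n-x,x-Tx\rangle$ tends to $0$, while the whole right-hand side tends to $0$ by the boundedness argument above. Letting $n\rightarrow+\infty$ therefore yields $\|x-Tx\|^2\le 0$, hence $Tx=x$ and $x\in\fix T$. I expect the main obstacle to be precisely this maneuver: recognizing that the two different expansions of $\|x_n-Tx\|^2$ must be subtracted so that boundedness of $(x_n)_{n\in\N}$ absorbs the $e_n$-terms while weak convergence kills the remaining linear term, with no appeal to any strong compactness.
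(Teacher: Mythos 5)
Your proof is correct, and it is essentially the paper's own approach: the paper gives no proof of this lemma, simply citing it as a consequence of the demiclosedness principle \cite[Theorem 4.17 and Corollary 4.18]{bauschke-book}, and your two-expansion argument for $\|x_n-Tx\|^2$ (one controlled by nonexpansiveness and the vanishing of $e_n$, the other exploiting weak convergence against the fixed vector $x-Tx$) is precisely the standard proof of that principle. The preliminary observation that $x\in D$ by weak sequential closedness of the closed convex set $D$ is also the right way to ensure $Tx$ is defined.
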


The \textit{parallel sum} of two operators $A,B:{\cal H}\rightrightarrows {\cal H}$ is defined by 
$A\Box B: {\cal H}\rightrightarrows {\cal H}, A\Box B=(A^{-1}+B^{-1})^{-1}$. If $A$ and $B$ are monotone, then we have the following characterization of the set of zeros 
of their sum (see \cite[Proposition 25.1(ii)]{bauschke-book}): 
\begin{equation}\label{zer(A+B)} \zer(A+B)=J_{\gamma B}(\fix R_{\gamma A}R_{\gamma B}) \ \forall \gamma >0.  
\end{equation}

The following result is a direct consequence of \cite[Corollary 25.5]{bauschke-book} and will be used in the proof of the convergence of 
the inertial Douglas--Rachford splitting algorithm. 

\begin{lemma}\label{conv-weak-str-sum} Let $A,B:{\cal H}\rightrightarrows{\cal H}$ be maximally monotone operators and the sequences 
$(x_n,u_n)_{n\in\N}\in\gr A$, $(y_n,v_n)_{n\in\N}\in\gr B$ such that 
$x_n\rightharpoonup x, u_n\rightharpoonup u, y_n\rightharpoonup y, v_n\rightharpoonup v$, $u_n+v_n\rightarrow 0$ and $x_n-y_n\rightarrow 0$ as 
$n\rightarrow+\infty$. Then $x=y\in\zer(A+B)$, $(x,u)\in\gr A$ and $(y,v)\in\gr B$.
\end{lemma}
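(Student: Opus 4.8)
The plan is to first pin down the limits and then to exploit the monotonicity of $A$ and $B$ jointly. Since $x_n\rightharpoonup x$, $y_n\rightharpoonup y$ and $x_n-y_n\rightarrow 0$, uniqueness of the weak limit forces $x=y$; likewise, from $u_n\rightharpoonup u$, $v_n\rightharpoonup v$ and $u_n+v_n\rightarrow 0$ we get $u+v=0$. The delicate point is that nothing converges strongly on its own, so the pairings $\langle x_n,u_n\rangle$ and $\langle y_n,v_n\rangle$, each an inner product of two merely weakly convergent sequences, need not pass to the limit individually. I would circumvent this by the key estimate
\begin{equation*}
\langle x_n,u_n\rangle+\langle y_n,v_n\rangle=\langle x_n-y_n,u_n\rangle+\langle y_n,u_n+v_n\rangle\rightarrow 0,
\end{equation*}
which holds because weakly convergent sequences are bounded, while $x_n-y_n\rightarrow 0$ and $u_n+v_n\rightarrow 0$ strongly.

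Next I would use monotonicity of both operators at once. For arbitrary $(a,b)\in\gr A$ and $(c,d)\in\gr B$ one has $\langle x_n-a,u_n-b\rangle\geq 0$ and $\langle y_n-c,v_n-d\rangle\geq 0$; adding these, expanding, and letting $n\rightarrow+\infty$, using the estimate above for the troublesome cross term, the relation $u+v=0$, and the convergence of the remaining single-sequence pairings such as $\langle x_n,b\rangle\rightarrow\langle x,b\rangle$ and $\langle a,u_n\rangle\rightarrow\langle a,u\rangle$, one arrives at
\begin{equation*}
\langle x-a,u-b\rangle+\langle x-c,v-d\rangle\geq 0\quad\forall (a,b)\in\gr A,\ \forall (c,d)\in\gr B.
\end{equation*}

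To decouple this joint inequality into the two desired graph memberships I would invoke the resolvents, which are everywhere single-valued by maximal monotonicity. Setting $\tilde x=J_A(x+u)$ and $\tilde y=J_B(x+v)$, one has $(\tilde x,(x+u)-\tilde x)\in\gr A$ and $(\tilde y,(x+v)-\tilde y)\in\gr B$; substituting these particular choices of $(a,b)$ and $(c,d)$ collapses each summand to a negative square, yielding $-\|x-\tilde x\|^2-\|x-\tilde y\|^2\geq 0$ and hence $\tilde x=\tilde y=x$. This forces $(x,u)\in\gr A$ and $(x,v)\in\gr B$, and since $x=y$ also $(y,v)\in\gr B$; finally $0=u+v\in Ax+Bx$ gives $x\in\zer(A+B)$. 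The main obstacle is precisely the failure of the individual inner products to pass to the limit, which I resolve by pairing the two sequences through the strong convergences in the key estimate, together with the subsequent need to split the single combined inequality into two separate ones, achieved by the Minty-type resolvent substitution.
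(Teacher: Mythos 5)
Your argument is correct. Note, however, that the paper does not prove this lemma at all: it simply records it as a direct consequence of Corollary~25.5 in Bauschke--Combettes, so any honest comparison is with the proof in that reference rather than with the paper. What you give is a complete, self-contained version of essentially that standard argument, and every step checks out: the identity $\langle x_n,u_n\rangle+\langle y_n,v_n\rangle=\langle x_n-y_n,u_n\rangle+\langle y_n,u_n+v_n\rangle$ correctly converts the two strong convergences plus boundedness of weakly convergent sequences into convergence of the otherwise intractable sum of pairings; summing the two monotonicity inequalities and passing to the limit (using $u+v=0$ to absorb $\langle x,u\rangle+\langle x,v\rangle$) gives the joint inequality $\langle x-a,u-b\rangle+\langle x-c,v-d\rangle\geq 0$; and the simultaneous substitution $a=\tilde x=J_A(x+u)$, $b=(x+u)-\tilde x$, $c=\tilde y=J_B(x+v)$, $d=(x+v)-\tilde y$ — legitimate precisely because maximal monotonicity makes both resolvents everywhere defined and single-valued — collapses the left-hand side to $-\|x-\tilde x\|^2-\|x-\tilde y\|^2$, forcing $\tilde x=\tilde y=x$ and hence $(x,u)\in\gr A$, $(x,v)\in\gr B$, $0=u+v\in Ax+Bx$. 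The decoupling via the resolvent is the one genuinely nontrivial idea, and you have identified and executed it correctly; your write-up has the added benefit of making explicit where maximality (as opposed to mere monotonicity) is actually used.
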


We close this section by presenting two convergence results which will be crucial for the proof of the main results
in the next section. 

\begin{lemma}\label{ext-fejer1} (see \cite{alvarez-attouch2001, alvarez2000, alvarez2004}) Let $(\varphi_n)_{n\in\N}, (\delta_n)_{n\in\N}$ and $(\alpha_n)_{n\in \N}$ be sequences in
$[0,+\infty)$ such that $\varphi_{n+1}\leq\varphi_n+\alpha_n(\varphi_n-\varphi_{n-1})+\delta_n$
for all $n \geq 1$, $\sum_{n\in \N}\delta_n< + \infty$ and there exists a real number $\alpha$ with
$0\leq\alpha_n\leq\alpha<1$ for all $n\in\N$. Then the following hold: \begin{itemize}\item[(i)] $\sum_{n \geq 1}[\varphi_n-\varphi_{n-1}]_+< + \infty$, where
$[t]_+=\max\{t,0\}$; \item[(ii)] there exists $\varphi^*\in[0,+\infty)$ such that $\lim_{n\rightarrow+\infty}\varphi_n=\varphi^*$.\end{itemize}
\end{lemma}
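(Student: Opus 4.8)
The plan is to reduce everything to the scalar quantity $\theta_n := [\varphi_n-\varphi_{n-1}]_+$, prove its summability first, and then extract convergence of $(\varphi_n)_{n\in\N}$ from a monotonicity argument on a telescoped auxiliary sequence. The two parts (i) and (ii) are naturally handled in this order, since (ii) will rely on the finiteness established in (i).

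First I would derive a clean recurrence for the positive increments. Using the hypothesis $0\leq\alpha_n\leq\alpha<1$ together with $\varphi_n-\varphi_{n-1}\leq[\varphi_n-\varphi_{n-1}]_+=\theta_n$, the given inequality yields
\[
\varphi_{n+1}-\varphi_n\leq\alpha_n(\varphi_n-\varphi_{n-1})+\delta_n\leq\alpha\theta_n+\delta_n\quad\forall n\geq 1.
\]
Since the right-hand side is nonnegative, taking positive parts lets me push the bound through to obtain the scalar recurrence $\theta_{n+1}\leq\alpha\theta_n+\delta_n$ for all $n\geq 1$. To prove (i), I would then sum this from $n=1$ to $N$. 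Writing $S_N:=\sum_{n=1}^N\theta_n$ and reindexing the left-hand side gives $S_{N+1}-\theta_1\leq\alpha S_N+\sum_{n=1}^N\delta_n$; using $S_N\leq S_{N+1}$ and $\sum_{n\in\N}\delta_n<+\infty$ this rearranges to
\[
(1-\alpha)S_{N+1}\leq\theta_1+\sum_{n\in\N}\delta_n,
\]
so $S_{N+1}$ is bounded uniformly in $N$ by $\tfrac{1}{1-\alpha}\bigl(\theta_1+\sum_{n\in\N}\delta_n\bigr)$, and hence $\sum_{n\geq 1}\theta_n<+\infty$.

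For (ii), I would introduce the auxiliary sequence $\psi_n:=\varphi_n-\sum_{k=1}^n\theta_k$. Its increment is $\psi_n-\psi_{n-1}=(\varphi_n-\varphi_{n-1})-\theta_n$, which by the definition of the positive part equals $0$ when $\varphi_n\geq\varphi_{n-1}$ and is strictly negative otherwise; in both cases it is $\leq 0$, so $(\psi_n)_{n\in\N}$ is nonincreasing. Moreover, each $\varphi_n\geq 0$ and, by part (i), $\sum_{k=1}^n\theta_k$ is bounded above by the finite value $t^*:=\sum_{k\geq 1}\theta_k$, so $\psi_n\geq -t^*$ is bounded below. A nonincreasing sequence bounded below converges to some $\psi^*$, whence $\varphi_n=\psi_n+\sum_{k=1}^n\theta_k\to\psi^*+t^*=:\varphi^*$, and $\varphi^*\geq 0$ follows from $\varphi_n\geq 0$.

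The main subtlety, and really the only place requiring care, is the passage from the original recurrence—where the signed increment $\varphi_n-\varphi_{n-1}$ may be negative and is coupled to the varying coefficient $\alpha_n$—to the clean scalar recurrence for $\theta_{n+1}$. The two observations that make this work are that $\alpha_n(\varphi_n-\varphi_{n-1})\leq\alpha\theta_n$ and that the positive part can be taken on both sides precisely because the bounding quantity $\alpha\theta_n+\delta_n$ is already nonnegative. Once this recurrence is in hand, the remaining steps are routine geometric-series summation and the monotone convergence of the telescoped sequence $(\psi_n)_{n\in\N}$.
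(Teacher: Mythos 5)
Your proof is correct. The paper itself gives no proof of this lemma --- it is quoted from the cited works of Alvarez and Attouch --- and your argument (deriving the scalar recursion $\theta_{n+1}\leq\alpha\theta_n+\delta_n$ for the positive parts, summing a geometric-type bound to get (i), and then telescoping via the nonincreasing, bounded-below auxiliary sequence $\psi_n=\varphi_n-\sum_{k=1}^{n}\theta_k$ for (ii)) is exactly the standard proof found in those references, with all the delicate points (the passage from the signed increment to $\alpha\theta_n$, and the nonnegativity of the bound before taking positive parts) handled correctly.
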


Finally, we recall a well known result on weak convergence in Hilbert spaces.

\begin{lemma}\label{opial} (Opial) Let $C$ be a nonempty set of ${\cal H}$ and $(x_n)_{n\in\N}$ be a sequence in ${\cal H}$ such that
the following two conditions hold: \begin{itemize}\item[(a)] for every $x\in C$, $\lim_{n\rightarrow + \infty}\|x_n-x\|$ exists;
\item[(b)] every sequential weak cluster point of $(x_n)_{n\in\N}$ is in $C$;\end{itemize}
Then $(x_n)_{n\in\N}$ converges weakly to a point in $C$.
 \end{lemma}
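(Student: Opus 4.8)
The plan is to prove this in three stages: first establish that $(x_n)_{n\in\N}$ is bounded and hence possesses at least one weak cluster point lying in $C$; then show it can possess at most one such cluster point; and finally invoke the standard fact that a bounded sequence in a Hilbert space with a unique weak cluster point converges weakly to it.

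For boundedness, I would fix any $x\in C$ (possible since $C\neq\varnothing$). By (a) the real sequence $(\|x_n-x\|)_{n\in\N}$ converges and is therefore bounded, so $(x_n)_{n\in\N}$ is bounded. Since ${\cal H}$ is a Hilbert space, bounded sequences are weakly sequentially compact, whence $(x_n)_{n\in\N}$ admits at least one weak cluster point, which by (b) belongs to $C$.

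The heart of the argument is the uniqueness of the weak cluster point. Let $x$ and $y$ be two weak cluster points; by (b) both lie in $C$, so by (a) the limits $\lim_{n\rightarrow+\infty}\|x_n-x\|$ and $\lim_{n\rightarrow+\infty}\|x_n-y\|$ both exist. Expanding the squared norms I obtain
$$\|x_n-x\|^2-\|x_n-y\|^2=2\langle x_n,y-x\rangle+\|x\|^2-\|y\|^2,$$
so the left-hand side converges and therefore $\lim_{n\rightarrow+\infty}\langle x_n,y-x\rangle$ exists. Passing to a subsequence along which $x_n\rightharpoonup x$ identifies this limit as $\langle x,y-x\rangle$, while passing to a subsequence along which $x_n\rightharpoonup y$ identifies it as $\langle y,y-x\rangle$. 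Equating the two yields $\langle y-x,y-x\rangle=0$, that is $x=y$.

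Finally, I would conclude using the criterion that a bounded sequence converges weakly to $x^*$ precisely when every weakly convergent subsequence has limit $x^*$: if $(x_n)_{n\in\N}$ failed to converge weakly to its unique cluster point $x^*\in C$, there would exist $h\in{\cal H}$, $\varepsilon>0$ and a subsequence with $|\langle x_n-x^*,h\rangle|\geq\varepsilon$; extracting a further weakly convergent sub-subsequence would produce, by the lower bound tested against $h$, a cluster point distinct from $x^*$, contradicting uniqueness. I expect the uniqueness step to be the main obstacle, since it is there that the two hypotheses (a) and (b) must be combined through the norm identity; the boundedness and the final extraction are routine consequences of weak sequential compactness.
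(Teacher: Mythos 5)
Your proof is correct and is the standard argument for Opial's lemma: boundedness from hypothesis (a), the norm-expansion identity $\|x_n-x\|^2-\|x_n-y\|^2=2\langle x_n,y-x\rangle+\|x\|^2-\|y\|^2$ to force uniqueness of the weak cluster point, and weak sequential compactness to conclude. The paper itself states this lemma without proof, citing it as a classical result, so there is nothing to compare against; your write-up supplies exactly the argument one would expect, with no gaps.
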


\section{An inertial Douglas--Rachford splitting algorithm}\label{sec2}

This section is dedicated to the formulation of an inertial Douglas--Rachford splitting algorithm which approaches the set of zeros
of the sum of two maximally monotone operators and to the investigation of its convergence properties. 

In the first part we propose an inertial version of the Krasnosel'ski\u{\i}--Mann algorithm for approximating the 
set of fixed points of a nonexpansive operator, a result which has its own interest. Notice that due to the presence of affine combinations in the iterative scheme, we 
have to restrict the setting to nonexpansive operators defined on affine subspaces. Let us underline that this assumption is fulfilled 
when considering the composition of the reflected resolvents of maximally monotone operators, which will be the case when deriving the inertial Douglas--Rachford algorithm.   

\begin{theorem}\label{inertial-kr-m-th} Let $M$ be a nonempty closed and affine subset of ${\cal H}$ and $T:M\rightarrow M$ a nonexpansive 
operator such that $\fix T\neq\varnothing$. We consider the following iterative scheme: 
\begin{equation}\label{inertial-kr-m-alg}x_{n+1}=x_n+\alpha_n(x_n-x_{n-1})+\lambda_n\Big[T\big(x_n+\alpha_n(x_n-x_{n-1})\big)-x_n-\alpha_n(x_n-x_{n-1})\Big] \ \forall n\geq 1\end{equation}
where $x_0,x_1$ are arbitrarily chosen in $M$, $(\alpha_n)_{n\geq 1}$ is nondecreasing with  $\alpha_1=0$ and $0\leq \alpha_n\leq\alpha < 1$ for every $n\geq 1$ and $\lambda, \sigma, \delta >0$ are such that 
\begin{equation}\label{assum}
\delta>\frac{\alpha^2(1+\alpha)+\alpha\sigma}{1-\alpha^2} \ \mbox{and} \ 0<\lambda\leq\lambda_n\leq \frac{\delta-\alpha\Big[\alpha(1+\alpha)+\alpha\delta+\sigma\Big]}{\delta\Big[1+\alpha(1+\alpha)+\alpha\delta+\sigma\Big]} \
\forall n \geq 1.
\end{equation}
Then the following statements are true:
\begin{itemize}\item[(i)] $\sum_{n\in\N}\|x_{n+1}-x_n\|^2<+\infty$; 
\item[(ii)] $(x_n)_{n\in\N}$ converges weakly to a point in $\fix T$.\end{itemize}
\end{theorem}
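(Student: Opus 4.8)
The plan is to run a Fej\'er-type energy argument relative to an arbitrary fixed point, in the spirit of Alvarez--Attouch (Lemma \ref{ext-fejer1}) and Opial (Lemma \ref{opial}). Fix $x^*\in\fix T$ and set $\varphi_n:=\|x_n-x^*\|^2$. Introduce the inertial point $y_n:=x_n+\alpha_n(x_n-x_{n-1})$, so that \eqref{inertial-kr-m-alg} reads $x_{n+1}=(1-\lambda_n)y_n+\lambda_n Ty_n$ and, since $M$ is affine and $x_n,x_{n-1}\in M$, also $y_n,x_{n+1}\in M$. Applying identity \eqref{id-hilb} with the weights $\lambda_n\in(0,1)$ together with $\|Ty_n-x^*\|=\|Ty_n-Tx^*\|\le\|y_n-x^*\|$ and $x_{n+1}-y_n=\lambda_n(Ty_n-y_n)$ gives \[\varphi_{n+1}\le\|y_n-x^*\|^2-\frac{1-\lambda_n}{\lambda_n}\|x_{n+1}-y_n\|^2.\] Expanding $\|y_n-x^*\|^2$ via $2\langle x_n-x^*,x_n-x_{n-1}\rangle=\varphi_n-\varphi_{n-1}+\|x_n-x_{n-1}\|^2$ yields the one-step estimate \[\varphi_{n+1}\le\varphi_n+\alpha_n(\varphi_n-\varphi_{n-1})+\alpha_n(1+\alpha_n)\|x_n-x_{n-1}\|^2-\frac{1-\lambda_n}{\lambda_n}\|x_{n+1}-y_n\|^2.\] To convert the last term into a usable multiple of $\|x_{n+1}-x_n\|^2$, I would feed in the parameter $\sigma$ through the Young-type bound $\|x_{n+1}-x_n\|^2\le(1+\sigma)\|x_{n+1}-y_n\|^2+\alpha_n^2(1+\sigma^{-1})\|x_n-x_{n-1}\|^2$, which after rearrangement produces coefficients $A_n$ on $\|x_n-x_{n-1}\|^2$ and $B_n$ on $\|x_{n+1}-x_n\|^2$ depending only on $\alpha_n,\lambda_n,\sigma$ (with $A_n$ bounded because $\lambda_n\ge\lambda>0$).

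The core of the argument is then to read the resulting inequality as the decrease of an energy. Writing $z_n:=x_n-x_{n-1}$ (so that $z_{n+1}=x_{n+1}-x_n$), I would set $E_n:=\varphi_n-\alpha_n\varphi_{n-1}+c\,\|z_n\|^2$ for a constant $c$ to be chosen strictly between $\sup_nA_n$ and $\inf_nB_n$. Using that $(\alpha_n)$ is nondecreasing, so that $(\alpha_n-\alpha_{n+1})\varphi_n\le0$, I expect the estimate to collapse to $E_{n+1}\le E_n-\gamma\|z_{n+1}\|^2$ for some $\gamma>0$; this is exactly the place where hypotheses \eqref{assum} are needed, namely to guarantee the separation $\sup_nA_n<\inf_nB_n$ and hence admissible values of $c$ and $\gamma$. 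Monotonicity of $(E_n)$ gives $E_{n+1}\le E_1=\varphi_1+c\|x_1-x_0\|^2$ (here $\alpha_1=0$ is used), whence $\varphi_{n+1}\le\alpha\varphi_n+E_1$ and $(\varphi_n)$ is bounded; consequently $(E_n)$ is bounded below, converges, and summing $E_{n+1}\le E_n-\gamma\|z_{n+1}\|^2$ yields $\sum_n\|x_{n+1}-x_n\|^2<+\infty$, which is statement~(i).

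Granted (i), I would obtain the existence of $\lim_n\varphi_n$ by returning to the one-step estimate in the form $\varphi_{n+1}\le\varphi_n+\alpha_n(\varphi_n-\varphi_{n-1})+A_n\|x_n-x_{n-1}\|^2$ and invoking Lemma \ref{ext-fejer1} with $\delta_n:=A_n\|x_n-x_{n-1}\|^2$, which is summable since $A_n$ is bounded and $\sum_n\|x_n-x_{n-1}\|^2<+\infty$ by (i). Thus $\lim_n\|x_n-x^*\|$ exists for every $x^*\in\fix T$, which is condition (a) of Opial's Lemma \ref{opial} with $C=\fix T$. For condition (b): from (i) we have $\|x_{n+1}-x_n\|\to0$ and $\|x_n-y_n\|=\alpha_n\|x_n-x_{n-1}\|\to0$, hence $\|x_{n+1}-y_n\|\to0$ and, since $\lambda_n\ge\lambda>0$, $\|Ty_n-y_n\|=\lambda_n^{-1}\|x_{n+1}-y_n\|\to0$. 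Because $\|x_n-y_n\|\to0$, the sequences $(x_n)$ and $(y_n)$ share the same weak cluster points, so the demiclosedness principle (Lemma \ref{x_n-T-fix}, applied on the closed convex set $M$) forces every weak cluster point of $(x_n)$ into $\fix T$. Opial's Lemma \ref{opial} then delivers the weak convergence asserted in~(ii).

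The only delicate point is the constant bookkeeping in the second paragraph: verifying that the specific bounds in \eqref{assum} are precisely what makes $A_n\le c\le B_n-\gamma$ hold uniformly in $n$. I expect this to reduce, after inserting $\alpha_n\le\alpha$ and the admissible range of $\lambda_n$, to the two scalar inequalities displayed in \eqref{assum}, with the lower bound on $\delta$ governing $\sup_nA_n$ and the upper bound on $\lambda_n$ governing $\inf_nB_n$; if so, the hypotheses close the argument with no further assumptions. Everything else is the standard inertial Krasnosel'ski\u{\i}--Mann scheme and routine Hilbert-space algebra.
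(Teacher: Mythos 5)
Your architecture is the same as the paper's: introduce the inertial point $w_n=x_n+\alpha_n(x_n-x_{n-1})$, combine \eqref{id-hilb} with nonexpansiveness to get the one-step estimate, build an Alvarez--Attouch energy $\varphi_n-\alpha_n\varphi_{n-1}+c\|x_n-x_{n-1}\|^2$, then use Lemma \ref{ext-fejer1}, the demiclosedness principle (Lemma \ref{x_n-T-fix}) and Opial's Lemma \ref{opial} for part (ii). Your part (ii) and the telescoping/boundedness argument are fine. The genuine gap is exactly the step you flag as ``delicate,'' and it is not merely unverified --- as parameterized, it fails. With a constant Young parameter equal to $\sigma$, your coefficients are $A_n=\alpha_n(1+\alpha_n)+\frac{(1-\lambda_n)\alpha_n^2}{\lambda_n\sigma}$ and $B_n=\frac{1-\lambda_n}{\lambda_n(1+\sigma)}$. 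Hypothesis \eqref{assum} places no lower bound on $\lambda$ beyond positivity, so when $\alpha>0$ the quantity $\sup_n A_n$ can be arbitrarily large (take some $\lambda_n$ near $0$), while $\inf_n B_n$ is a fixed finite number attained when $\lambda_n$ sits at its admissible upper bound. The separation $\sup_n A_n<\inf_n B_n$ that your constant $c$ requires therefore need not hold under \eqref{assum}, and the stated hypotheses cannot be recovered from your bookkeeping.

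The paper resolves precisely this point by taking an $n$-dependent Young parameter $\rho_n=(\alpha_n+\delta\lambda_n)^{-1}$ in the bound $2\langle x_{n+1}-x_n,x_{n-1}-x_n\rangle\geq-\rho_n\|x_{n+1}-x_n\|^2-\rho_n^{-1}\|x_n-x_{n-1}\|^2$. This choice makes $\frac{1-\rho_n\alpha_n}{\rho_n\lambda_n}=\delta$, so the coefficient of $\|x_n-x_{n-1}\|^2$ becomes $\gamma_n=\alpha_n(1+\alpha_n)+\alpha_n(1-\lambda_n)\delta\leq\alpha(1+\alpha)+\alpha\delta$, uniformly bounded no matter how small $\lambda_n$ is, while the coefficient of $\|x_{n+1}-x_n\|^2$ becomes $-\frac{(1-\lambda_n)\delta}{\alpha_n+\delta\lambda_n}$. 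The desired decrement $\mu_{n+1}-\mu_n\leq-\sigma\|x_{n+1}-x_n\|^2$ then reduces to the scalar inequality $(\alpha_n+\delta\lambda_n)(\gamma_{n+1}+\sigma)+\delta\lambda_n\leq\delta$, which is exactly what the lower bound on $\delta$ and the upper bound on $\lambda_n$ in \eqref{assum} deliver. To close your argument you must replace the fixed Young parameter by this $\lambda_n$-dependent one (equivalently, let the energy constant track $n$ as $\gamma_n$ does); everything downstream of that point in your proposal then goes through as written.
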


\begin{proof} Let us start with the remark that, due to the choice of $\delta$, $\lambda_n\in(0,1)$ for every $n\geq 1$. Further, we would like to notice that, since $M$ is affine, 
the iterative scheme provides a well-defined sequence in $M$.

(i) We denote
$$w_n:=x_n+\alpha_n(x_n-x_{n-1}) \ \forall n \geq 1.$$ Then the iterative scheme reads for every $n \geq 1$:
\begin{equation}\label{w_n} x_{n+1}=w_n+\lambda_n(Tw_n-w_n).\end{equation}
Let us fix an element $y\in\fix T$ and $n\geq 1$. It follows from \eqref{id-hilb} and the nonexpansiveness of $T$ that 
\begin{align}\label{id-T}
\|x_{n+1}-y\|^2 & =(1-\lambda_n)\|w_n-y\|^2+\lambda_n\|Tw_n-Ty\|^2-\lambda_n(1-\lambda_n)\|Tw_n-w_n\|^2\nonumber\\
                & \leq \|w_n-y\|^2-\lambda_n(1-\lambda_n)\|Tw_n-w_n\|^2.
\end{align}

Applying again \eqref{id-hilb} we have
\begin{align*}\|w_n-y\|^2 & =\|(1+\alpha_n)(x_n-y)-\alpha_n(x_{n-1}-y)\|^2\\ 
                          & = (1+\alpha_n)\|x_n-y\|^2-\alpha_n\|x_{n-1}-y\|^2+\alpha_n(1+\alpha_n)\|x_n-x_{n-1}\|^2,\end{align*}
hence by \eqref{id-T} we obtain 
\begin{align}\label{ineq} \|x_{n+1}-y\|^2-(1+\alpha_n)\|x_n-y\|^2+\alpha_n\|x_{n-1}-y\|^2
\leq & -\lambda_n(1-\lambda_n)\|Tw_n-w_n\|^2 \nonumber \\
     & +\alpha_n(1+\alpha_n)\|x_n-x_{n-1}\|^2.
 \end{align}
                          
Further, we have 
\begin{align}\label{ineq2} \|Tw_n-w_n\|^2
& = \left\|\frac{1}{\lambda_n}(x_{n+1}-x_n)+\frac{\alpha_n}{\lambda_n}(x_{n-1}-x_n)\right\|^2\nonumber\\
& = \frac{1}{\lambda_n^2}\|x_{n+1}-x_n\|^2+\frac{\alpha_n^2}{\lambda_n^2}\|x_n-x_{n-1}\|^2 
+ 2\frac{\alpha_n}{\lambda_n^2}\langle x_{n+1}-x_n,x_{n-1}-x_n\rangle\nonumber\\
& \geq \frac{1}{\lambda_n^2}\|x_{n+1}-x_n\|^2+\frac{\alpha_n^2}{\lambda_n^2}\|x_n-x_{n-1}\|^2\nonumber\\
& + \frac{\alpha_n}{\lambda_n^2}\left(-\rho_n\|x_{n+1}-x_n\|^2-\frac{1}{\rho_n}\|x_n-x_{n-1}\|^2\right), 
\end{align}
where we denote $\rho_n:=\frac{1}{\alpha_n+\delta\lambda_n}$.

We derive from \eqref{ineq} and \eqref{ineq2} the inequality (notice that $\lambda_n\in(0,1)$) 
\begin{align}\label{ineq3} \|x_{n+1}-y\|^2-(1+\alpha_n)\|x_n-y\|^2+\alpha_n\|x_{n-1}-y\|^2 \leq & \frac{(1-\lambda_n)(\alpha_n\rho_n-1)}{\lambda_n}\|x_{n+1}-x_n\|^2 \nonumber\\
& +\gamma_n\|x_n-x_{n-1}\|^2,
 \end{align}
where \begin{equation}\label{gamma_n}\gamma_n:=\alpha_n(1+\alpha_n)+\alpha_n(1-\lambda_n)\frac{1-\rho_n\alpha_n}{\rho_n\lambda_n}>0.\end{equation}

Taking again into account the choice of $\rho_n$ we have $$\delta = \frac{1-\rho_n\alpha_n}{\rho_n\lambda_n}$$ and by 
\eqref{gamma_n} it follows
\begin{equation}\label{ineq6}\gamma_n= \alpha_n(1+\alpha_n)+\alpha_n(1-\lambda_n)\delta\leq\alpha(1+\alpha)+\alpha\delta 
 \ \forall n\geq 1.\end{equation}
 
In the following we use some techniques from \cite{alvarez-attouch2001} adapted to our setting. We define the sequences
$\varphi_n:=\|x_n-y\|^2$ for all $n\in\N$ and $\mu_n:=\varphi_n-\alpha_n\varphi_{n-1}+\gamma_n\|x_n-x_{n-1}\|^2$ for all
$n\geq 1$. Using the monotonicity of $(\alpha_n)_{n \geq 1}$ and the fact that $\varphi_n\geq0$ for all $n\in\N$, we get
\begin{align*}
\mu_{n+1}-\mu_n\leq \varphi_{n+1}-(1+\alpha_n)\varphi_n+\alpha_n\varphi_{n-1}+\gamma_{n+1}\|x_{n+1}-x_n\|^2-\gamma_n\|x_n-x_{n-1}\|^2,
\end{align*}
which gives by \eqref{ineq3}
\begin{equation}\label{ineq4}
\mu_{n+1}-\mu_n\leq\left(\frac{(1-\lambda_n)(\alpha_n\rho_n-1)}{\lambda_n}+\gamma_{n+1}\right)\|x_{n+1}-x_n\|^2 \ \forall n \geq 1.
\end{equation}

We claim that
\begin{equation}\label{ineq5}\frac{(1-\lambda_n)(\alpha_n\rho_n-1)}{\lambda_n}+\gamma_{n+1}\leq-\sigma \ \forall n \geq 1.\end{equation}
Let be $n \geq 1$. Indeed, by the choice of $\rho_n$, we get 
\begin{align*}
& \ \frac{(1-\lambda_n)(\alpha_n\rho_n-1)}{\lambda_n}+\gamma_{n+1}\leq-\sigma\\
\Longleftrightarrow & \ \lambda_n(\gamma_{n+1}+\sigma)+(\alpha_n\rho_n-1)(1-\lambda_n)\leq 0\\
\Longleftrightarrow & \ \lambda_n(\gamma_{n+1}+\sigma)-\frac{\delta\lambda_n(1-\lambda_n)}{\alpha_n+\delta\lambda_n}\leq 0\\
\Longleftrightarrow & \ (\alpha_n+\delta\lambda_n)(\gamma_{n+1}+\sigma)+\delta\lambda_n\leq \delta.
\end{align*}

Thus, by using \eqref{ineq6}, we have 
$$(\alpha_n+\delta\lambda_n)(\gamma_{n+1}+\sigma)+\delta\lambda_n\leq
(\alpha+\delta\lambda_n)\Big(\alpha(1+\alpha)+\alpha\delta+\sigma\Big)+\delta\lambda_n\leq\delta,$$
where the last inequality follows by taking into account the upper bound considered for $(\lambda_n)_{n\geq 1}$ in \eqref{assum}. Hence the claim in  \eqref{ineq5} is true. 

We obtain from \eqref{ineq4} and \eqref{ineq5} that
\begin{equation}\label{ineq7}\mu_{n+1}-\mu_n\leq-\sigma\|x_{n+1}-x_n\|^2 \ \forall n\geq 1.\end{equation}

The sequence $(\mu_n)_{n \geq 1}$ is nonincreasing and the bound for $(\alpha_n)_{n \geq 1}$ delivers
\begin{equation}\label{ineq8}
-\alpha\varphi_{n-1}\leq\varphi_n-\alpha\varphi_{n-1}\leq \mu_n\leq\mu_1 \ \forall n \geq 1.
\end{equation}

We obtain
$$\varphi_n\leq \alpha^n\varphi_0+\mu_1\sum_{k=0}^{n-1}\alpha^k\leq \alpha^n\varphi_0+\frac{\mu_1}{1-\alpha} \ \forall n \geq 1,$$
where we notice that $\mu_1=\varphi_1\geq 0$ (due to the relation $\alpha_1=0$). Combining \eqref{ineq7} and \eqref{ineq8}, we get for all $n \geq 1$
$$\sigma\sum_{k=1}^{n}\|x_{k+1}-x_k\|^2\leq \mu_1-\mu_{n+1}\leq \mu_1+\alpha\varphi_n\leq \alpha^{n+1}\varphi_0+\frac{\mu_1}{1-\alpha},$$ which
shows that $\sum_{n \in \N}\|x_{n+1}-x_n\|^2<+\infty$.

(ii) We prove this by using the result of Opial given in Lemma \ref{opial}. We have proven above that for an arbitrary $y\in\fix T$ the inequality 
\eqref{ineq3} is true. By part (i), \eqref{ineq6} and Lemma \ref{ext-fejer1} we derive that $\lim_{n\rightarrow+\infty}\|x_n-y\|$ 
exists (we take into consideration also that in \eqref{ineq3} $\alpha_n\rho_n<1$ for all $n\geq 1$). On the other hand, 
let $x$ be a sequential weak cluster point of $(x_n)_{n\in\N}$, that is, the latter has a subsequence $(x_{n_k})_{k \in \N}$
fulfilling $x_{n_k}\rightharpoonup x$ as $k\rightarrow+\infty$. By part (i), the definition of $w_n$ and the upper bound 
requested for $(\alpha_n)_{n\geq 1}$, we get $w_{n_k}\rightharpoonup x$ as $k\rightarrow+\infty$. Further, by \eqref{w_n} we have 
\begin{equation}\label{Tw_n}\|Tw_n-w_n\|=\frac{1}{\lambda_n}\|x_{n+1}-w_n\|\leq \frac{1}{\lambda}\|x_{n+1}-w_n\|\leq \frac{1}{\lambda}\big(\|x_{n+1}-x_n\|+\alpha\|x_n-x_{n-1}\|\big),\end{equation}
thus by (i) we obtain $Tw_{n_k}-w_{n_k}\rightarrow 0$ as $k\rightarrow+\infty$. Applying now Lemma \ref{x_n-T-fix} for the 
sequence $(w_{n_k})_{k\in\N}$ we conclude that  $x\in\fix T$. Since the two assertions of Lemma \ref{opial} are verified, it follows that $(x_n)_{n\in\N}$ converges weakly to 
a point in $\fix T$. 
\end{proof}

\begin{remark} The condition $\alpha_1=0$ was imposed in order to ensure $\mu_1\geq 0$, which is needed in the proof.
An alternative is to require that $x_0=x_1$, in which case the assumption $\alpha_1=0$ is not anymore necessary.
\end{remark}

\begin{remark} Assuming that $\alpha=0$ (which enforces $\alpha_n=0$ for all $ n\geq 1$), the iterative scheme in the previous theorem is nothing else than the one in
the classical Krasnosel'ski\u{\i}--Mann algorithm:
\begin{equation}\label{kr-m-alg}x_{n+1}=x_n+\lambda_n(Tx_n-x_n) \ \forall n\geq 1.\end{equation}
Let us mention that the convergence of this iterative scheme can be proved under more general hypotheses, namely when 
$M$ is a nonempty closed and convex set and the sequence $(\lambda_n)_{n\in\N}$ satisfies the relation 
$\sum_{n\in\N}\lambda_n(1-\lambda_n)=+\infty$ (see \cite[Theorem 5.14]{bauschke-book}).
\end{remark}

We are now in position to state the inertial Douglas--Rachford splitting algorithm and to present its convergence properties. 

\begin{theorem}\label{inertial-DR} (Inertial Douglas--Rachford splitting algorithm) Let $A,B:{\cal H}\rightrightarrows{\cal H}$ be 
maximally monotone operators such that $\zer(A+B)\neq\varnothing$. Consider the following iterative scheme: 
$$(\forall n\geq 1)\hspace{0.2cm}\left\{
\begin{array}{ll}
y_n=J_{\gamma B}[x_n+\alpha_n(x_n-x_{n-1})]\\
z_n=J_{\gamma A}[2y_n-x_n-\alpha_n(x_n-x_{n-1})]\\
x_{n+1}=x_n+\alpha_n(x_n-x_{n-1})+\lambda_n(z_n-y_n)
\end{array}\right.$$
where $\gamma>0$, $x_0,x_1$ are arbitrarily chosen in $\cal{H}$, $(\alpha_n)_{n\geq 1}$ is nondecreasing with  $\alpha_1=0$ and $0\leq \alpha_n\leq\alpha < 1$ for every $n\geq 1$ and $\lambda, \sigma, \delta >0$ are such that 
\begin{equation*}
\delta>\frac{\alpha^2(1+\alpha)+\alpha\sigma}{1-\alpha^2} \ \mbox{and} \ 0<\lambda\leq\lambda_n\leq 2\frac{\delta-\alpha\Big[\alpha(1+\alpha)+\alpha\delta+\sigma\Big]}{\delta\Big[1+\alpha(1+\alpha)+\alpha\delta+\sigma\Big]} \
\forall n \geq 1.
\end{equation*}
Then there exists $x\in\fix(R_{\gamma A}R_{\gamma B})$ such that 
the following statements are true:
\begin{itemize} \item[(i)] $J_{\gamma B}x\in\zer(A+B)$;
\item[(ii)] $\sum_{n\in\N}\|x_{n+1}-x_n\|^2<+\infty$; 
\item[(iii)] $(x_n)_{n\in\N}$ converges weakly to $x$;
\item[(iv)] $y_n-z_n\rightarrow 0$ as $n\rightarrow+\infty$;
\item[(v)] $(y_n)_{n\geq 1}$ converges weakly to $J_{\gamma B}x$;
\item[(vi)] $(z_n)_{n\geq 1}$ converges weakly to $J_{\gamma B}x$;
\item[(vii)] if $A$ or $B$ is uniformly monotone, then $(y_n)_{n\geq 1}$ and $(z_n)_{n\geq 1}$ converge strongly to the unique point in $\zer(A+B)$.                    
\end{itemize} 
\end{theorem}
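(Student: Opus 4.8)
The plan is to reduce the inertial Douglas--Rachford scheme to the inertial Krasnosel'ski\u{\i}--Mann algorithm of Theorem~\ref{inertial-kr-m-th} applied to the operator $T=R_{\gamma A}R_{\gamma B}$ on $M=\mathcal H$, and then to upgrade the resulting weak convergence of the governing sequence $(x_n)$ into statements about the auxiliary sequences $(y_n)$ and $(z_n)$. First I would verify that $T$ is a legitimate input for Theorem~\ref{inertial-kr-m-th}: since $A$ and $B$ are maximally monotone, their reflected resolvents $R_{\gamma A}$ and $R_{\gamma B}$ are nonexpansive, hence so is their composition $T$, and $M=\mathcal H$ is trivially closed and affine. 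By the characterization \eqref{zer(A+B)}, the hypothesis $\zer(A+B)\neq\varnothing$ forces $\fix(R_{\gamma A}R_{\gamma B})\neq\varnothing$, so all standing assumptions of Theorem~\ref{inertial-kr-m-th} hold.

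The crucial bookkeeping step is to show that the three-line recursion in the statement is exactly the update \eqref{inertial-kr-m-alg} for $T=R_{\gamma A}R_{\gamma B}$. Writing $w_n=x_n+\alpha_n(x_n-x_{n-1})$, one computes $R_{\gamma B}w_n=2J_{\gamma B}w_n-w_n=2y_n-w_n$, and then $R_{\gamma A}(R_{\gamma B}w_n)=2J_{\gamma A}(2y_n-w_n)-(2y_n-w_n)=2z_n-(2y_n-w_n)$, so that $Tw_n-w_n=2z_n-2y_n=2(z_n-y_n)$. Substituting into \eqref{inertial-kr-m-alg} turns the relaxation step into $x_{n+1}=w_n+\lambda_n(Tw_n-w_n)=w_n+2\lambda_n(z_n-y_n)$; absorbing the factor $2$ into the relaxation parameter explains precisely why the admissible upper bound for $\lambda_n$ in this theorem is twice the one in Theorem~\ref{inertial-kr-m-th}. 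With this identification, parts (ii) and (iii) follow immediately: (ii) is conclusion~(i) of Theorem~\ref{inertial-kr-m-th}, and (iii) is conclusion~(ii), which yields a point $x\in\fix(R_{\gamma A}R_{\gamma B})$ with $x_n\rightharpoonup x$. Statement~(i) is then just \eqref{zer(A+B)} applied to this fixed point, giving $J_{\gamma B}x\in\zer(A+B)$.

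For (iv) I would note that $2(z_n-y_n)=Tw_n-w_n$ and that $\|Tw_n-w_n\|\to 0$ was established inside the proof of Theorem~\ref{inertial-kr-m-th} (see \eqref{Tw_n}), so $y_n-z_n\to 0$. For (v) and (vi), since $J_{\gamma B}$ is nonexpansive hence weakly continuous is \emph{not} automatic, so the honest route is to use Lemma~\ref{conv-weak-str-sum}. Concretely, I would extract from the iteration the pairs lying in the graphs of $A$ and $B$: from $y_n=J_{\gamma B}w_n$ one gets $(y_n,\gamma^{-1}(w_n-y_n))\in\gr B$, and from $z_n=J_{\gamma A}(2y_n-w_n)$ one gets $(z_n,\gamma^{-1}(2y_n-w_n-z_n))\in\gr A$. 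Using $w_n\rightharpoonup x$, $y_n-z_n\to 0$, and the firm nonexpansiveness relations, one checks that the hypotheses of Lemma~\ref{conv-weak-str-sum} are met, so every weak cluster point of $(y_n)$ lies in $\zer(A+B)$ and equals $J_{\gamma B}x$; since $(y_n)$ and $(z_n)$ are bounded with a unique weak cluster point, they converge weakly to $J_{\gamma B}x$, establishing (v) and (vi).

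Finally, for (vii) I would invoke the uniform monotonicity \eqref{unif-mon-def}: summing the monotonicity inequalities for $A$ and $B$ along the graph pairs above against the limiting zero, the parallelogram identity \eqref{id-hilb} together with the convergence $\|x_n-x\|\to\ell$ forces the defining gap quantity $\phi_A(\|y_n-z\|)$ (or $\phi_B$) to vanish, and since $\phi_A$ is increasing and vanishes only at $0$ this yields strong convergence $y_n\to z$, whence $z_n\to z$ by (iv); uniqueness of $z$ in $\zer(A+B)$ follows from strict/uniform monotonicity of $A+B$. The main obstacle I anticipate is the careful verification in (v)--(vii) that the sequences of dual variables $\gamma^{-1}(w_n-y_n)$ and $\gamma^{-1}(2y_n-w_n-z_n)$ converge weakly and have opposite-sign limiting sum, so that Lemma~\ref{conv-weak-str-sum} genuinely applies; this requires tracking the telescoping cancellations that make $u_n+v_n\to 0$ rather than merely staying bounded.
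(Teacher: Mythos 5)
Your proposal is correct and follows essentially the same route as the paper: reduction to Theorem \ref{inertial-kr-m-th} for $T=R_{\gamma A}R_{\gamma B}$ with the factor $2$ absorbed into $\lambda_n$ (explaining the doubled upper bound), the graph pairs $(z_n,\gamma^{-1}(2y_n-w_n-z_n))\in\gr A$ and $(y_n,\gamma^{-1}(w_n-y_n))\in\gr B$ fed into Lemma \ref{conv-weak-str-sum} for (v)--(vi), and the summed (uniform) monotonicity inequalities for (vii). The only cosmetic difference is that the paper's strong-convergence argument in (vii) needs no appeal to the parallelogram identity, only $z_n-y_n\rightarrow 0$ and $w_n\rightharpoonup x$.
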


\begin{proof} We use again the notation $w_n=x_n+\alpha_n(x_n-x_{n-1})$ for all $n\geq 1$. Taking into account the iteration rules and the definition of the reflected resolvent, the iterative scheme in the enunciation of 
the theorem can be for every $n \geq 1$ written as
\begin{align}\label{DR-kr-m}x_{n+1}= & \  w_n+\lambda_n\Big[J_{\gamma A}\circ(2J_{\gamma B}-\id)w_n-J_{\gamma B}w_n\Big]\nonumber\\
                    = & \ w_n+\lambda_n\left[\left(\frac{\id +R_{\gamma A}}{2}\circ R_{\gamma B}\right)w_n-\frac{\id +R_{\gamma B}}{2}w_n\right]\nonumber\\
                    = & \ w_n+\frac{\lambda_n}{2}(Tw_n-w_n),
\end{align}
where $T:=R_{\gamma A}\circ R_{\gamma A} : {\cal H} \rightarrow {\cal H}$ is a nonexpansive operator. From \eqref{zer(A+B)} we have $\zer(A+B)=J_{\gamma B}(\fix T)$, hence we get 
$\fix T\neq\varnothing$. By applying Theorem \ref{inertial-kr-m-th}, there exists $x\in\fix T$ such that 
(i)-(iii) hold. 

(iv) Follows from Theorem \ref{inertial-kr-m-th} and relation \eqref{Tw_n}, since $z_n-y_n=\frac{1}{2}(Tw_n-w_n)$ for $n \geq 1$. 

(v) We will show that $(y_n)_{n\geq 1}$ is bounded and that $J_{\gamma B}x$ is the unique weak sequential cluster point of 
$(y_n)_{n\geq 1}$. From here the conclusion will automatically follow. By using that $J_{\gamma B}$ is nonexpansive, for all $n\geq 1$ we have  
$$\|y_n-y_1\|=\|J_{\gamma B}w_n-J_{\gamma B}w_1\|\leq\|w_n-w_1\|=\|x_n-x_1+\alpha_n(x_n-x_{n-1})\|.$$ 
Since $(x_n)_{n\in\N}$ is bounded (by (iii)) and $(\alpha_n)_{n\geq 1}$ is also bounded, the sequence $(y_n)_{n\geq 1}$ is bounded, too. 

Now let $y$ be a sequential weak cluster point of $(y_n)_{n\geq 1}$, that is, the latter has a subsequence $(y_{n_k})_{k \in \N}$
fulfilling $y_{n_k}\rightharpoonup y$ as $k\rightarrow+\infty$. We use the notations $u_n:=2y_n-w_n-z_n$ and $v_n:=w_n-y_n$ 
for all $n\geq 1$. The definitions of the resolvent yields
\begin{equation}\label{res}
(z_n,u_n)\in\gr (\gamma A), \ (y_n,v_n)\in \gr (\gamma B) \ \mbox{and} \ u_n+v_n=y_n-z_n \ \forall n \geq 1.
\end{equation}

Further, by (ii), (iii) and (iv) we derive  
$$ z_{n_k}\rightharpoonup y, w_{n_k}\rightharpoonup x, u_{n_k}\rightharpoonup y-x \mbox{ and } v_{n_k}\rightharpoonup x-y \mbox{ as }k\rightarrow+\infty.$$
Using again (ii) and Lemma \ref{conv-weak-str-sum} we obtain $y\in\zer(\gamma A+\gamma B)=\zer(A+B)$, $(y,y-x)\in\gr \gamma A$ 
and $(y,x-y)\in\gr \gamma B$. As a consequence, $y=J_{\gamma B}x$.

(vi) Follows from (iv) and (v). 

(vii) We prove the statement in case $A$ is uniformly monotone, the situation when $B$ fulfills this condition being similar. 
Denote $y=J_{\gamma B}x$. There exists an increasing function $\phi_A:[0,+\infty)\rightarrow [0,+\infty]$ that 
vanishes only at $0$ such that (see also \eqref{res} and the considerations made in the proof of (v))
$$\gamma\phi_A(\|z_n-y\|)\leq \langle z_n-y,u_n-y+x\rangle \ \forall n\geq 1.$$

Moreover, since $B$ is monotone we have (see \eqref{res})
$$0\leq\langle y_n-y,v_n-x+y\rangle=\langle y_n-y,y_n-z_n-u_n-x+y\rangle \ \forall n\geq 1.$$
Summing up the last two inequalities we obtain
$$\gamma\phi_A(\|z_n-y\|)\leq \langle z_n-y_n,u_n-y_n+x\rangle=\langle z_n-y_n,y_n-z_n-w_n+x\rangle \ \forall n\geq 1.$$

Since $z_n-y_n\rightarrow 0$ and $w_n\rightharpoonup x$ as $n\rightarrow+\infty$, from the last inequality we get
$\lim_{n\rightarrow+\infty}\phi_A(\|z_n-y\|)=0$, hence $z_n\rightarrow y$ and therefore $y_n\rightarrow y$ as 
$n\rightarrow+\infty$. 
\end{proof}

\begin{remark} In case $\alpha=0$, which enforces $\alpha_n=0$ for all $n\geq 1$, the iterative scheme in Theorem \ref{inertial-DR} becomes the classical Douglas--Rachford 
splitting algorithm (see \cite[Theorem 25.6]{bauschke-book}): 
$$(\forall n\geq 1)\hspace{0.2cm}\left\{
\begin{array}{ll}
y_n=J_{\gamma B}x_n\\
z_n=J_{\gamma A}(2y_n-x_n)\\
x_{n+1}=x_n+\lambda_n(z_n-y_n),
\end{array}\right.$$
 
\noindent the convergence of which holds under the assumption $\sum_{n\in\N}\lambda_n(1-\lambda_n)=+\infty$. 
\end{remark}

\begin{remark}\label{inertial-prox} In case $Bx=0$ for all $x\in\cal{H}$, the iterative scheme in Theorem \ref{inertial-DR} becomes
$$x_{n+1} = \lambda_n J_{\gamma A}\big(x_n+\alpha_n(x_n-x_{n-1})\big) + (1-\lambda_n) (x_n+\alpha_n(x_n-x_{n-1})) \ \forall n\geq 1,$$ 
which was already considered in \cite{alvarez2004} as a proximal-point algorithm (see \cite{rock-prox}) in the context of solving the
monotone inclusion problem $0 \in Ax$. Notice that in this scheme in each iteration a constant step-size $\gamma>0$ is considered.  
Proximal-point algorithms of inertial-type with variable step-sizes have been proposed and investigated, for instance,
in \cite[Theorem 2.1]{alvarez-attouch2001}, \cite{alvarez2004} and \cite[Remark 7]{b-c-inertial}.  
\end{remark}

\section{Solving monotone inclusion problems involving mixtures of linearly composed and parallel-sum type operators}\label{sec3}

In this section we apply the inertial Douglas--Rachford algorithm proposed in Section \ref{sec2} to a highly structured primal-dual system of monotone inclusions by making use of appropriate splitting techniques. The problem under investiagtion reads as follows.

\begin{problem}\label{dr_p1}
Let $A:\h \rightrightarrows \h$ be a maximally monotone operator and let $z \in \h$. Further, let $m$ be a strictly positive integer and 
for every $i \in \{1,\!...,m\}$, let $r_i \in \g_i$, $B_i : \g_i \rightrightarrows \g_i$ and $D_i : \g_i \rightrightarrows \g_i$ 
be maximally monotone operators and let $L_i : \h \rightarrow \g_i$ be a nonzero linear continuous operator. The problem is to solve 
the primal inclusion
\begin{align}
	\label{dr_opt-p}
	\text{find }\bx \in \h \text{ such that } z \in A\bx + \sum_{i=1}^m L_i^* (B_i\Box D_i)(L_i \bx-r_i)
\end{align}
together with the dual inclusion
\begin{align}
	\label{dr_opt-d}
	\text{find }\bv_1 \in \g_1,\!...,\bv_m \in \g_m \text{ such that }(\exists x\in\h)\left\{
	\begin{array}{l}
		z - \sum_{i=1}^m L_i^*\bv_i \in Ax \\
		\!\!\bv_i \in \!\! (B_i \Box D_i)(L_ix-r_i), \,i=1,\!...,m.
	\end{array}
\right.
\end{align}
\end{problem}

We say that $(\bx, \bv_1,\!...,\bv_m) \in \h \times \g_1 \,... \times \g_m$ is a primal-dual solution to Problem \ref{dr_p1}, if
\begin{align}
	\label{dr_operator-proof-conditions-full}
	z - \sum_{i=1}^m L_i^*\bv_i \in A\bx \ \mbox{and} \  \bv_i \in (B_i \Box D_i)(L_i\bx-r_i), \ i=1,\!...,m.
\end{align}
Note that, if $(\bx, \bv_1,\!...,\bv_m) \in \h \times \g_1 \,... \times \g_m$ is a primal-dual solution to Problem \ref{dr_p1}, then $\bx$ is a solution to \eqref{dr_opt-p} and $(\bv_1,\!...,\bv_m)$ is a solution to \eqref{dr_opt-d}. On the other hand, if $\bx\in\h$ is a solution to \eqref{dr_opt-p}, then there exists $(\bv_1,\!...,\bv_m)\in\g_1\times \,... \times \g_m$ such that $(\bx, \bv_1,\!...,\bv_m)$ is a primal-dual solution to Problem \ref{dr_p1}. Equivalently, if $(\bv_1,\!...,\bv_m)\in\g_1\times \,... \times \g_m$ is a solution to \eqref{dr_opt-d}, then there exists $\bx\in\h$ such that $(\bx, \bv_1,\!...,\bv_m)$ is a primal-dual solution to Problem \ref{dr_p1}.

Several particular instances of the primal-dual system of monotone inclusions \eqref{dr_opt-p}--\eqref{dr_opt-d} when applied to convex optimization problems can be found in \cite{combettes-pesquet,vu}.

The inertial primal-dual Douglas-Rachford algorithm we would like to propose for solving \eqref{dr_opt-p}--\eqref{dr_opt-d} is formulated as follows.

\begin{algorithm}\label{dr_alg1} Let $x_0,x_1 \in \h$, $v_{i,0},v_{i,1} \in \g_i$, $i=1,\!...,m$, and $\tau, \sigma_i >0$, $i=1,\!...,m,$ be such that
$$\tau \sum_{i=1}^m \sigma_i \|L_i\|^2 < 4. $$
Furthermore, let $(\alpha_n)_{n\geq 1}$ be a nondecreasing sequence with  $\alpha_1=0$ and $0\leq \alpha_n\leq\alpha < 1$ for every $n\geq 1$ and $\lambda, \sigma, \delta >0$ and the sequence $(\lambda_n)_{n\geq 1}$ be such that
\begin{equation*}
\delta>\frac{\alpha^2(1+\alpha)+\alpha\sigma}{1-\alpha^2} \ \mbox{and} \ 0<\lambda\leq\lambda_n\leq 2\frac{\delta-\alpha\Big[\alpha(1+\alpha)+\alpha\delta+\sigma\Big]}{\delta\Big[1+\alpha(1+\alpha)+\alpha\delta+\sigma\Big]} \
\forall n \geq 1.
\end{equation*}
Set
	\begin{align}\label{dr_A1}
	  \left(\forall n\geq 1\right) \begin{array}{l} \left\lfloor \begin{array}{l}
		p_{1,n} = J_{\tau A}\left( x_n +\alpha_n(x_n-x_{n-1}) - \frac{\tau}{2} \sum_{i=1}^m L_i^*(v_{i,n}+\alpha_n(v_{i,n}-v_{i,n-1})) + \tau z \right) \\
		w_{1,n} = 2p_{1,n} - x_n -\alpha_n(x_n-x_{n-1})\\
		\text{For }i=1,\!...,m  \\
				\left\lfloor \begin{array}{l}
					p_{2,i,n} = J_{\sigma_i B_i^{-1}}\left(v_{i,n}+\alpha_n(v_{i,n}-v_{i,n-1}) +\frac{\sigma_i}{2} L_i w_{1,n} - \sigma_i r_i \right) \\
					w_{2,i,n} = 2 p_{2,i,n} - v_{i,n} -\alpha_n(v_{i,n}-v_{i,n-1})
				\end{array} \right.\\
		z_{1,n} = w_{1,n} - \frac{\tau}{2} \sum_{i=1}^m L_i^* w_{2,i,n} \\
		x_{n+1} = x_n +\alpha_n(x_n-x_{n-1}) + \lambda_n ( z_{1,n} - p_{1,n} ) \\
		\text{For }i=1,\!...,m  \\
				\left\lfloor \begin{array}{l}
					z_{2,i,n} = J_{\sigma_i D_i^{-1}}\left(w_{2,i,n} + \frac{\sigma_i}{2}L_i (2 z_{1,n} - w_{1,n}) \right) \\
					v_{i,n+1} = v_{i,n}+  \alpha_n(v_{i,n}-v_{i,n-1}) + \lambda_n (z_{2,i,n} - p_{2,i,n}).
				\end{array} \right. \\ \vspace{-4mm}
		\end{array}
		\right.
		\end{array}
	\end{align}
\end{algorithm}

\begin{theorem}\label{dr-thm} In Problem \ref{dr_p1}, suppose that 
\begin{align}\label{dr_zin}
	z \in \ran \bigg( A +  \sum_{i=1}^m L_i^*(B_i\Box D_i)(L_i \cdot -r_i) \bigg),
\end{align}
and consider the sequences generated by Algorithm \ref{dr_alg1}. Then there exists $(\bx, \bv_1,\!...,\bv_m) \in \h \times \g_1 \, ... \times \g_m$ such that the following statements are true:
\begin{enumerate} 
	\item[(i)]\label{thm1.1} By setting 
	\begin{align*}
	\bp_1 &= J_{\tau A}\left( \bx - \frac{\tau}{2} \sum_{i=1}^m L_i^*\bv_{i} + \tau z \right), \\
	\bp_{2,i} &= J_{\sigma_i B_i^{-1}}\left(\bv_{i}+\frac{\sigma_i}{2} L_i (2\bp_1-\bx) - \sigma_i r_i \right),\ i=1,\!...,m,
	\end{align*}
	the element $(\bp_1,\bp_{2,1},\!...,\bp_{2,m}) \in \h \times \g_1 \times \!... \times \g_m$ is a primal-dual solution to Problem \ref{dr_p1};
	\item[(ii)]\label{thm1.2} $\sum_{n\in\N}\|x_{n+1}-x_n\|^2<+\infty$ and $\sum_{n\in\N}\|v_{i,n+1}-v_{i,n}\|^2<+\infty$, $i=1,\!...,m$; 
	\item[(iii)]\label{thm1.3} $(x_n,v_{1,n},\!...,v_{m,n})_{n\in\N}$ converges weakly to $(\bx, \bv_1,\!...,\bv_m)$;
	\item[(iv)]\label{thm1.4} $(p_{1,n}-z_{1,n},p_{2,1,n}-z_{2,1,n},\!...,p_{2,m,n}-z_{2,m,n})\rightarrow 0$ as $n\rightarrow+\infty$;
	\item[(v)]\label{thm1.5} $(p_{1,n},p_{2,1,n},\!...,p_{2,m,n})_{n\geq 1}$ converges weakly to $(\bp_1,\bp_{2,1},\!...,\bp_{2,m})$;
	\item[(vi)]\label{thm1.6} $(z_{1,n},z_{2,1,n},\!...,z_{2,m,n})_{n\geq 1}$ converges weakly to $(\bp_1,\bp_{2,1},\!...,\bp_{2,m})$;
	\item[(vii)]\label{thm1.7} if $A$ and $B_i^{-1}$, $i=1,\!...,m$, are uniformly monotone, then $(p_{1,n},p_{2,1,n},\!...,p_{2,m,n})_{n\geq 1}$ and $(z_{1,n},z_{2,1,n},\!...,z_{2,m,n})_{n\geq 1}$ converge strongly 
	to the unique primal-dual solution $(\bp_1,\bp_{2,1},\!...,\bp_{2,m})$ to Problem \ref{dr_p1}.                    
\end{enumerate} 
\end{theorem}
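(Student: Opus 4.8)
The plan is to recast the primal-dual system \eqref{dr_opt-p}--\eqref{dr_opt-d} as the problem of finding a zero of the sum of two maximally monotone operators on a suitably renormed product space, and then to invoke Theorem \ref{inertial-DR}. On $\mathcal{K} = \h \times \g_1 \times \cdots \times \g_m$ I introduce the skew-symmetric bounded linear operator
\[
S(x, v_1, \ldots, v_m) = \Big( \textstyle\sum_{i=1}^m L_i^* v_i, -L_1 x, \ldots, -L_m x \Big),
\]
the block-diagonal operator $M_1(x, v_1, \ldots, v_m) = (Ax - z) \times \prod_{i=1}^m (B_i^{-1}v_i + r_i)$ and $M_2(x, v_1, \ldots, v_m) = \{0\} \times \prod_{i=1}^m D_i^{-1}v_i$, and set $\boldsymbol{B} = M_1 + \tfrac12 S$ and $\boldsymbol{A} = M_2 + \tfrac12 S$. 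A direct computation shows that $0 \in (\boldsymbol{A} + \boldsymbol{B})(\bx, \bv_1, \ldots, \bv_m)$ is exactly the primal-dual optimality system \eqref{dr_operator-proof-conditions-full}, using $(B_i\Box D_i)^{-1} = B_i^{-1} + D_i^{-1}$; hence assumption \eqref{dr_zin} guarantees $\zer(\boldsymbol{A} + \boldsymbol{B}) \ne \varnothing$. Both $\boldsymbol{A}$ and $\boldsymbol{B}$ are maximally monotone, being the sum of a (block-diagonal) maximally monotone operator and the skew-symmetric --- hence monotone, single-valued and full-domain --- operator $\tfrac12 S$.

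Next I equip $\mathcal{K}$ with the inner product induced by the bounded self-adjoint operator
\[
V(x, v_1, \ldots, v_m) = \Big( \tfrac1\tau x - \tfrac12 \textstyle\sum_{i=1}^m L_i^* v_i, \ \tfrac1{\sigma_1} v_1 - \tfrac12 L_1 x, \ldots, \tfrac1{\sigma_m} v_m - \tfrac12 L_m x \Big).
\]
A Cauchy--Schwarz estimate yields $\langle V u, u\rangle = \tfrac1\tau\|x\|^2 + \sum_i \tfrac1{\sigma_i}\|v_i\|^2 - \sum_i \langle L_i x, v_i\rangle$, which is bounded below by a positive multiple of $\|u\|^2$ precisely when $\tau \sum_{i=1}^m \sigma_i\|L_i\|^2 < 4$; thus $V$ is strongly positive and $\mathcal{K}_V := (\mathcal{K}, \langle V\cdot, \cdot\rangle)$ is a Hilbert space with norm equivalent to the original one. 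In $\mathcal{K}_V$ the operators $V^{-1}\boldsymbol{A}$ and $V^{-1}\boldsymbol{B}$ are maximally monotone and $\zer(V^{-1}\boldsymbol{A} + V^{-1}\boldsymbol{B}) = \zer(\boldsymbol{A} + \boldsymbol{B})$.

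The central step is to verify that the inertial Douglas--Rachford scheme of Theorem \ref{inertial-DR}, with $\gamma = 1$ applied to $V^{-1}\boldsymbol{A}$ and $V^{-1}\boldsymbol{B}$ in $\mathcal{K}_V$, coincides iteration-by-iteration with Algorithm \ref{dr_alg1}. Writing $\boldsymbol{x}_n = (x_n, v_{1,n}, \ldots, v_{m,n})$ and the extrapolated point $\boldsymbol{w}_n = \boldsymbol{x}_n + \alpha_n(\boldsymbol{x}_n - \boldsymbol{x}_{n-1})$, the resolvent identity $\boldsymbol{y} = J^V_{\boldsymbol{B}}\boldsymbol{w} \Leftrightarrow V(\boldsymbol{w} - \boldsymbol{y}) \in \boldsymbol{B}\boldsymbol{y}$ must be shown to decouple into the sequential updates $p_{1,n} = J_{\tau A}(\cdots)$ followed by $p_{2,i,n} = J_{\sigma_i B_i^{-1}}(\cdots)$; the off-diagonal part of $V$ combines with the $\tfrac12 S$ summand so that $V + \tfrac12 S$ is block lower-triangular, which is exactly what turns the implicit resolvent into the explicit Gauss--Seidel-type recursion. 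The same computation for $J^V_{\boldsymbol{A}}$ produces the $x$-update $z_{1,n}$ with no resolvent (the $x$-block of $M_2$ being zero) and the $D_i^{-1}$-resolvent step $z_{2,i,n}$. I expect this decoupling verification, together with the strong-positivity estimate for $V$, to be the main obstacle: it is where the precise factors $\tfrac{\tau}{2}$, $\tfrac{\sigma_i}{2}$ and the doubled bound on $\lambda_n$ have to be reconciled with the bookkeeping of Theorem \ref{inertial-DR}.

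Finally, with the identification $\boldsymbol{y}_n = (p_{1,n}, p_{2,1,n}, \ldots, p_{2,m,n})$ and $\boldsymbol{z}_n = (z_{1,n}, z_{2,1,n}, \ldots, z_{2,m,n})$ in place, the conclusions transfer directly from Theorem \ref{inertial-DR}. Since $\mathcal{K}_V$ and $\mathcal{K}$ carry the same weak topology and equivalent norms, weak and strong convergence as well as square-summability of increments are preserved: part (iii) of Theorem \ref{inertial-DR} gives (ii) and (iii) here, namely $\boldsymbol{x}_n \rightharpoonup (\bx, \bv_1, \ldots, \bv_m)$; evaluating $J^V_{\boldsymbol{B}}$ at this limit reproduces the formulas in (i) and, via Theorem \ref{inertial-DR}(i), shows that $(\bp_1, \bp_{2,1}, \ldots, \bp_{2,m}) = J^V_{\boldsymbol{B}}(\bx, \bv_1, \ldots, \bv_m) \in \zer(\boldsymbol{A} + \boldsymbol{B})$ is a primal-dual solution; parts (iv)--(vi) follow from Theorem \ref{inertial-DR}(iv)--(vi). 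For (vii), uniform monotonicity of $A$ and of each $B_i^{-1}$ makes $M_1$, hence $\boldsymbol{B} = M_1 + \tfrac12 S$ and hence $V^{-1}\boldsymbol{B}$, uniformly monotone, so that Theorem \ref{inertial-DR}(vii) yields strong convergence of $(\boldsymbol{y}_n)_{n\geq 1}$ and $(\boldsymbol{z}_n)_{n\geq 1}$ to the unique primal-dual solution.
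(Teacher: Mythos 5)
Your proposal is correct and follows essentially the same route as the paper: the same product-space operators $\f M$ (your $M_1$), $\f Q$ (your $M_2$), the skew operator $\f S$ split evenly between the two blocks, the same self-adjoint strongly positive preconditioner $\f V$ defining the renormed space $\fK_{\f V}$, and the same reduction of Algorithm \ref{dr_alg1} to the inertial Douglas--Rachford iteration of Theorem \ref{inertial-DR} with $\gamma=1$, with the Gauss--Seidel decoupling of the $\f V$-resolvents being exactly the computation the paper carries out in \eqref{dr_A1.1}--\eqref{dr_A1.4}. The only caveat is that the strong positivity of $\f V$ holds \emph{under} the condition $\tau\sum_{i=1}^m\sigma_i\|L_i\|^2<4$ (a sufficient condition, as cited from \cite{b-h2}) rather than ``precisely when'' it holds, but this does not affect the argument.
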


\begin{proof}
For the proof we use Theorem \ref{inertial-DR} and adapt the techniques from \cite{b-h2} (see also \cite{vu}) to the given setting. 
We consider the Hilbert space $\fK = \h \times \g_1 \times \,... \times \g_m$ endowed with inner product and associated norm defined, for  $(x,v_1,\!...,v_m)$, $(y,q_1,\!...,q_m) \in \fK$, via
\begin{align}
	\label{dr_def1.001}
	\begin{aligned}
	\< (x,v_1,\!...,v_m),(y,q_1,\!...,q_m) \>_{\fK} &= \<x,y\>_{\h} + \sum_{i=1}^m \< v_i,q_i \>_{\g_i} \\ \text{ and } \|(x,v_1,\!...,v_m)\|_{\fK} &= \sqrt{\|x\|_{\h}^2 + \sum_{i=1}^m \| v_i \|_{\g_i}^2},
	\end{aligned}
\end{align}
respectively. Further, we consider the set-valued operator
\begin{align*}
	\f M : \fK \rightrightarrows \fK, \quad (x,v_1,\!...,v_m) \mapsto (-z + Ax, r_1 + B_1^{-1}v_1, \!..., r_m + B_m^{-1}v_m),
\end{align*}
which is maximally monotone,  since $A$ and $B_i$, $i=1,\!...,m,$ are maximally monotone (see \cite[Proposition 20.22 and Proposition 20.23]{bauschke-book}), and the linear continuous operator
\begin{align*}
	\f S : \fK \rightarrow \fK, \quad (x,v_1,\!...,v_m) \mapsto \left(\sum_{i=1}^m L_i^* v_i, -L_1 x , \!..., - L_m x\right),
\end{align*}
which is skew-symmetric (i.\,e. $\f S^*=-\f S$) and hence maximally monotone (see \cite[Example 20.30]{bauschke-book}). Moreover, we consider the set-valued operator
\begin{align*}
	\f Q : \fK \rightrightarrows \fK, \quad (x,v_1,\!...,v_m) \mapsto \left(0, D_1^{-1} v_1, \!..., D_m^{-1} v_m\right),
\end{align*}
which is once again maximally monotone, since $D_i$ is maximally monotone for $i=1,\!...,m$. 
Therefore, since $\dom \f S = \fK$, both $\frac{1}{2}\f S+ \f Q$ and $\frac{1}{2}\f S + \f M$ are maximally monotone (see \cite[Corollary 24.4(i)]{bauschke-book}). Furthermore, one can easily notice that 
$$\eqref{dr_zin} \Leftrightarrow \,\zer\left(\f M + \f S + \f Q\right) \neq \varnothing$$ 
and
\begin{align}
	\label{dr_optcond}
	\begin{aligned}
	&(x,v_1,\!...,v_m) \in \zer\left(\f M + \f S + \f Q\right) \\ 
	\Rightarrow &(x, v_1,\!...,v_m) \text{ is a primal-dual solution to Problem } \ref{dr_p1}.
	\end{aligned}
\end{align}
We also introduce the linear continuous operator
\begin{align*}
	\f V : \fK \rightarrow \fK, \quad (x,v_1,\!...,v_m) \mapsto \left(\frac{x}{\tau} -\frac{1}{2} \sum_{i=1}^m L_i^* v_i, \frac{v_1}{\sigma_1} - \frac{1}{2} L_1x , \!..., \frac{v_m}{\sigma_m} -\frac{1}{2} L_m x\right),
\end{align*}
which is self-adjoint and $\rho$-strongly positive (see \cite{b-h2}) for 
$$\rho := \left(1-\frac{1}{2} \sqrt{\tau \sum_{i=1}^m \sigma_i \|L_i\|^2}\right) \min\left\{\frac{1}{\tau}, \frac{1}{\sigma_1}, \ldots, \frac{1}{\sigma_m} \right\}>0,$$
namely, the following inequality holds $$\< \f x,\f V\f x\>_{\fK}\geq \rho\|\f x\|_{\fK}^2 \ \forall \f x\in\fK.$$
Therefore, its inverse operator $\f V^{-1}$ exists and it fulfills $\|\f V^{-1}\|\leq \frac{1}{\rho}$.

Note that the algorithmic scheme \eqref{dr_A1} is equivalent to
	\begin{align}\label{dr_A1.1}
	  \left(\forall n\geq 1\right) \!\begin{array}{l} \left\lfloor \begin{array}{l}
		\frac{x_n- p_{1,n}}{\tau} +\alpha_n\frac{x_n - x_{n-1}}{\tau} - \frac{1}{2} \sum_{i=1}^m L_i^*(v_{i,n}+\alpha_n(v_{i,n}-v_{i,n-1})) \in Ap_{1,n}-z \\
		w_{1,n} = 2p_{1,n} - x_n -\alpha_n(x_n-x_{n-1}) \\
		\text{For }i=1,\!...,m  \\
				\left\lfloor \begin{array}{l}
				 \frac{v_{i,n} - p_{2,i,n}}{\sigma_i} +\alpha_n \frac{v_{i,n}-v_{i,n-1}}{\sigma_i}	- \frac{1}{2} L_i (x_n - p_{1,n}+\alpha_n(x_n-x_{n-1})) \\ \hspace{6.5cm} \in -\frac{1}{2}L_i p_{1,n} + B_i^{-1}p_{2,i,n} +r_i  \\
					w_{2,i,n} = 2 p_{2,i,n} - v_{i,n} -\alpha_n(v_{i,n}-v_{i,n-1})\\
				\end{array} \right.\\
		\frac{w_{1,n}-z_{1,n}}{\tau} - \frac{1}{2}\sum_{i=1}^m L_i^* w_{2,i,n} = 0 \\
		x_{n+1} = x_n +\alpha_n(x_n-x_{n-1})+ \lambda_n ( z_{1,n} - p_{1,n} ) \\
		\text{For }i=1,\!...,m  \\
				\left\lfloor \begin{array}{l}
					\frac{w_{2,i,n} - z_{2,i,n}}{\sigma_i} - \frac{1}{2}L_i(w_{1,n}-z_{1,n}) \in -\frac{1}{2}L_i z_{1,n} + D_i^{-1}z_{2,i,n} \\
					v_{i,n+1} = v_{i,n} +\alpha_n(v_{i,n}-v_{i,n-1})+ \lambda_n (z_{2,i,n} - p_{2,i,n}). \\
				\end{array} \right. \\		
		\end{array}
		\right.
		\end{array}
	\end{align}
By introducing the sequences 
\begin{align*}
 \fx_n = (x_n,v_{1,n},\!...,v_{m,n}),\ \fy_n =(p_{1,n},p_{2,1,n},\!...,p_{2,m,n}), \ \fz_n =(z_{1,n},z_{2,1,n},\!...,z_{2,m,n}) \ \forall n \geq 1,
\end{align*}
the scheme \eqref{dr_A1.1} can equivalently be written in the form
\begin{align}
	\label{dr_A1.2}
	\left(\forall n\geq 1\right)  \left\lfloor \begin{array}{l}
	\f V(\fx_n - \fy_n + \alpha_n(\fx_n-\fx_{n-1})) \in \left(\frac{1}{2}\f S +\f M\right)\fy_n \\
	\f V(2 \fy_n - \fx_n- \fz_n - \alpha_n(\fx_n-\fx_{n-1}) ) \in \left(\frac{1}{2}\f S +\f Q\right)\fz_n  \\
	\fx_{n+1} = \fx_n + \alpha_n(\fx_n-\fx_{n-1})+ \lambda_n \left(\fz_n-\fy_n\right),
	\end{array}
	\right.
\end{align}
which is equivalent to 
\begin{align}
	\label{dr_A1.3}
	\left(\forall n\geq 1\right)  \left\lfloor \begin{array}{l}
	\fy_n = \left(\id + \f V^{-1}(\frac{1}{2}\f S +\f M)\right)^{-1}\left(\fx_n + \alpha_n(\fx_n-\fx_{n-1})\right) \\
	\fz_n = \left(\id + \f V^{-1}(\frac{1}{2}\f S +\f Q)\right)^{-1}\left(2 \fy_n - \fx_n - \alpha_n(\fx_n-\fx_{n-1})\right) \\
	\fx_{n+1} = \fx_n + \alpha_n(\fx_n-\fx_{n-1}) + \lambda_n \left(\fz_n-\fy_n\right),
	\end{array}
	\right.
\end{align}
In the following, we consider the Hilbert space $\fK_{\f V}$ with inner product and norm respectively defined, for $\fx,\fy \in \fK$, via
\begin{align}\label{dr_HSKV}
 \< \fx,\fy \>_{\fK_{\f V}} = \< \fx, \f V \fy \>_{\fK} \text{ and } \|\fx\|_{\fK_{\f V}} = \sqrt{\< \fx, \f V \fx \>_{\fK}}.
\end{align}
As the set-valued operators $\frac{1}{2}\f S+ \f M$ and $\frac{1}{2}\f S+ \f Q$ are maximally monotone on $\fK$, the operators
\begin{align}\label{dr_def1.1}
		\f B := \f V^{-1}\left(\frac{1}{2}\f S +\f M\right) \ \mbox{and} \ \f A:=\f V^{-1}\left(\frac{1}{2}\f S +\f Q\right)
\end{align}
are maximally monotone on $\fK_{\f V}$. Moreover, since $\f V$ is self-adjoint and $\rho$-strongly positive, 
weak and strong convergence in $\fK_{\f V}$ are equivalent with weak and strong convergence in $\fK$, respectively.

Taking this into account, it shows that \eqref{dr_A1.3} becomes
\begin{align}
	\label{dr_A1.4}
	\left(\forall n\geq 1\right)  \left\lfloor \begin{array}{l}
	\fy_n = J_{\f B}\left(\fx_n + \alpha_n(\fx_n-\fx_{n-1})\right) \\
	\fz_n = J_{\f A}\left(2 \fy_n - \fx_n - \alpha_n(\fx_n-\fx_{n-1})\right) \\
	\fx_{n+1} = \fx_n + \alpha_n(\fx_n-\fx_{n-1}) + \lambda_n \left(\fz_n-\fy_n\right),
	\end{array}
	\right.
\end{align}
which is the inertial Douglas--Rachford algorithm presented in Theorem \ref{inertial-DR} in the space $\fK_{\f V}$ for $\gamma=1$. Furthermore, we have
$$\zer(\f A+\f B) = \zer(\f V^{-1}\left(\f M+\f S + \f Q\right)) = \zer(\f M + \f S + \f Q).$$

(i) By Theorem \ref{inertial-DR} (i), there  exists $\fbx=(\ol x,\ol v_1,...,\ol v_m)\in\fix(R_{\f A}R_{\f B})$, such that 
$J_{\f B}\fbx\in\zer(\f A+\f B) = \zer(\f M + \f S + \f Q)$. The claim follows by \eqref{dr_optcond} and by identifying $J_{\f B}\fbx$.

(ii) Since $\f V$ is $\rho$-strongly positive, we obtain from Theorem \ref{inertial-DR} (ii) that
\begin{align*}
	\rho \sum_{n\in\N}\|\fx_{n+1}-\fx_n\|_{\fK}^2 \leq \sum_{n\in\N}\|\fx_{n+1}-\fx_n\|_{\fK_{\f V}}^2<+\infty, 
\end{align*}
and therefore the claim follows by considering \eqref{dr_def1.001}.

(iii)--(vi) Follows directly from Theorem \ref{inertial-DR} (iii)--(vi).

(vii) The uniform monotonicity of $A$ and $B_i^{-1}$, $i=1,\!...,m$, implies uniform monotonicity of $\f M$ on $\fK$ (see, for instance, \cite[Theorem 2.1 (ii)]{b-h2}), 
while this further implies uniform monotonicity of $\f B$ on $\fK_{\f V}$. Therefore, the claim 
follows by Theorem \ref{inertial-DR} (vii).
\end{proof}

\section{Convex optimization problems}\label{sec4}

The aim of this section is to show how the inertial Douglas-Rachford primal-dual algorithm can be implemented 
when solving a primal-dual pair of convex optimization problems.

We recall first some notations used in the variational case, see \cite{bo-van, b-hab, bauschke-book, EkTem, simons, Zal-carte}. For a function $f:{\cal H}\rightarrow\overline{\R}$, where $\overline{\R}:=\R\cup\{\pm\infty\}$ is the extended real line, 
we denote by $\dom f=\{x\in {\cal H}:f(x)<+\infty\}$ its \textit{effective domain} and say that $f$ is \textit{proper} if $\dom f\neq\varnothing$ and $f(x)\neq-\infty$ for all $x\in {\cal H}$. 
We denote by $\Gamma({\cal H})$ the family of proper, convex and lower semi-continuous extended real-valued functions defined on ${\cal H}$. 
Let $f^*:{\cal H} \rightarrow \overline \R$, $f^*(u)=\sup_{x\in {\cal H}}\{\langle u,x\rangle-f(x)\}$ for all $u\in {\cal H}$, be the \textit{conjugate function} of $f$. 
The \textit{subdifferential} of $f$ at $x\in {\cal H}$, with $f(x)\in\R$, is the set $\partial f(x):=\{v\in {\cal H}:f(y)\geq f(x)+\langle v,y-x\rangle \ \forall y\in {\cal H}\}$. 
We take by convention $\partial f(x):=\varnothing$, if $f(x)\in\{\pm\infty\}$.  Notice that if $f\in\Gamma({\cal H})$, then $\partial f$ is a maximally monotone operator (see \cite{rock}) and it holds $(\partial f)^{-1} =
\partial f^*$. For two proper functions $f,g:{\cal H}\rightarrow \overline{\R}$, we consider their \textit{infimal convolution}, which is the function $f\Box g:{\cal H}\rightarrow\B$, defined by 
$(f\Box g)(x)=\inf_{y\in {\cal H}}\{f(y)+g(x-y)\}$, for all $x\in {\cal H}$.

Let $S\subseteq {\cal H}$ be a nonempty set. The \textit{indicator function} of $S$, $\delta_S:{\cal H}\rightarrow \overline{\R}$, is the function which takes the value $0$ on $S$ and $+\infty$ otherwise. 
The subdifferential of the indicator function is the \textit{normal cone} of $S$, that is $N_S(x)=\{u\in {\cal H}:\langle u,y-x\rangle\leq 0 \ \forall y\in S\}$, if $x\in S$ and $N_S(x)=\varnothing$ for $x\notin S$.

When $f\in\Gamma({\cal H})$ and $\gamma > 0$, for every $x \in {\cal H}$ we denote by $\prox_{\gamma f}(x)$ the \textit{proximal point} of parameter $\gamma$ of $f$ at $x$, 
which is the unique optimal solution of the optimization problem
\begin{equation}\label{prox-def}\inf_{y\in {\cal H}}\left \{f(y)+\frac{1}{2\gamma}\|y-x\|^2\right\}.
\end{equation}
Notice that $J_{\gamma\partial f}=(\id_{\cal H}+\gamma\partial f)^{-1}=\prox_{\gamma f}$, thus  $\prox_{\gamma f} :{\cal H} \rightarrow {\cal H}$ is a single-valued operator fulfilling the extended
\textit{Moreau's decomposition formula}
\begin{equation}\label{prox-f-star}
\prox\nolimits_{\gamma f}+\gamma\prox\nolimits_{(1/\gamma)f^*}\circ\gamma^{-1}\id\nolimits_{\cal H}=\id\nolimits_{\cal H}.
\end{equation}
Let us also recall that a proper function $f:{\cal H} \rightarrow \overline \R$ is said to be \textit{uniformly convex}, if there exists
an increasing function $\phi :[0,+\infty)\rightarrow[0,+\infty]$ which vanishes only at $0$ and such that
$$f(t x+(1-t)y)+t(1-t)\phi(\|x-y\|)\leq tf(x)+(1-t)f(y) \ \forall x,y\in\dom f\mbox{ and } \forall t\in(0,1).$$
In case this inequality holds for $\phi=(\beta/2)(\cdot)^2$, where $\beta >0$, then $f$ is said to be
\textit{$\beta$-strongly convex}. Let us mention that this property implies $\beta$-strong monotonicity of $\partial f$ (see \cite[Example 22.3]{bauschke-book})
(more general, if $f$ is uniformly convex, then $\partial f$ is uniformly monotone, see \cite[Example 22.3]{bauschke-book}).

Finally, we notice that for $f=\delta_S$, where $S\subseteq {\cal H}$ is a nonempty convex and closed set, it holds
\begin{equation}\label{projection}
J_{\gamma N_S}=J_{N_S}=J_{\partial \delta_S} = (\id\nolimits_{\cal H}+N_S)^{-1}=\prox\nolimits_{\delta_S}=P_S,
\end{equation}
where  $P_S :{\cal H} \rightarrow C$ denotes the \textit{orthogonal projection operator} on $S$ (see \cite[Example 23.3 and Example 23.4]{bauschke-book}).

In the sequel we consider the following primal-dual pair of convex optimization problems.
\begin{problem}\label{dr_p1_convex}
Let $\h$ be a real Hilbert space and let $f \in \Gamma(\h)$, $z\in\h$. Let $m$ be a strictly positive integer and for every 
$i\in\{1,\!...,m\}$, suppose that $\g_i$ is a real Hilbert space, let $g_i,\,l_i\in\Gamma(\g_i)$, $r_i\in\g_i$ and let $L_i:\h \rightarrow \g_i$ be a nonzero bounded linear operator. Consider the convex optimization problem
\begin{align}
	\label{dr_opt-mp}
	(P) \quad \inf_{x \in \h}{\left\{f(x)+\sum_{i=1}^m (g_i \Box l_i)(L_ix-r_i) -\<x,z\>\right\}}
\end{align}
and its conjugate dual problem
\begin{align}
	\label{dr_opt-md}
	(D) \quad \sup_{(v_1,...,v_m) \in \g_1\times\,...\times\g_m}{\left\{-f^*\left( z - \sum_{i=1}^m L_i^*v_i\right) - \sum_{i=1}^m \left( g_i^*(v_i) + l_i^*(v_i) + \< v_i,r_i \> \right) \right\} }.
\end{align}
\end{problem}

By taking into account the maximal monotone operators
\begin{align*}
	 A=\partial f, \ B_i = \partial g_i \text{ and }	D_i = \partial l_i,\ i=1,\!...,m,
\end{align*}
the monotone inclusion problem \eqref{dr_opt-p} reads
\begin{align}
	\label{dr_opt-po}
	\text{find }\bx \in \h \text{ such that } z \in \partial f(\bx) + \sum_{i=1}^m L_i^* (\partial g_i \Box \partial l_i)(L_i \bx-r_i),
\end{align}
while the dual inclusion problem \eqref{dr_opt-d} reads
\begin{align}
	\label{dr_opt-do}
	\text{find }\bv_1 \in \g_1,\,...,\bv_m \in \g_m \text{ such that }(\exists x\in\h)\left\{
	\begin{array}{l}
		z - \sum_{i=1}^m L_i^*\bv_i \in \partial f(x) \\
		\!\!\bv_i \in \!\! (\partial g_i \Box \partial l_i)(L_ix-r_i), \,i=1,\!...,m.
	\end{array}
\right.
\end{align}
If $(\bx, \bv_1,\!...,\bv_m) \in \h \times \g_1 \,... \times \g_m$ is a primal-dual solution to \eqref{dr_opt-po}--\eqref{dr_opt-do}, namely, 
\begin{align}
	\label{dr_operator-proof-conditions-full-0}
	z - \sum_{i=1}^m L_i^*\bv_i \in \partial f(\bx) \ \mbox{and} \  \bv_i \in (\partial g_i \Box \partial l_i)(L_i\bx-r_i), \,i=1,\!...,m,
\end{align}
then $\bx$ is an optimal solution to $(P)$, $(\bv_1,\!...,\bv_m)$ is an optimal solution to $(D)$ and the optimal objective values of the two problems, which we denote by $v(P)$ and $v(D)$, respectively, coincide (thus, strong duality holds).

Combining this statement with Algorithm \ref{dr_alg1} and Theorem \ref{dr-thm} gives rise to the following iterative scheme and corresponding convergence theorem for the primal-dual pair of optimization problems 
$(P)$--$(D)$. 

\begin{algorithm}\label{dr_alg1_convex} Let $x_0,x_1 \in \h$, $v_{i,0},v_{i,1} \in \g_i$, $i=1,\!...,m$, and $\tau, \sigma_i >0$, $i=1,\!...,m,$ be such that
$$\tau \sum_{i=1}^m \sigma_i \|L_i\|^2 < 4. $$
Furthermore, let $(\alpha_n)_{n\geq 1}$ be a nondecreasing sequence with  $\alpha_1=0$ and $0\leq \alpha_n\leq\alpha < 1$ for every $n\geq 1$ and $\lambda, \sigma, \delta >0$ and the sequence $(\lambda_n)_{n\geq 1}$ be such that
\begin{equation*}
\delta>\frac{\alpha^2(1+\alpha)+\alpha\sigma}{1-\alpha^2} \ \mbox{and} \ 0<\lambda\leq\lambda_n\leq 2\frac{\delta-\alpha\Big[\alpha(1+\alpha)+\alpha\delta+\sigma\Big]}{\delta\Big[1+\alpha(1+\alpha)+\alpha\delta+\sigma\Big]} \
\forall n \geq 1.
\end{equation*}
Set
	\begin{align}\label{dr_A1_convex}
	  \left(\forall n\geq 1\right) \begin{array}{l} \left\lfloor \begin{array}{l}
		p_{1,n} = \prox\nolimits_{\tau f}\!\left(x_n +\alpha_n(x_n-x_{n-1}) - \!\frac{\tau}{2} \!\sum_{i=1}^m L_i^*(v_{i,n}+\alpha_n(v_{i,n}-v_{i,n-1})) + \tau z \right) \\
		w_{1,n} = 2p_{1,n} - x_n -\alpha_n(x_n-x_{n-1})\\
		\text{For }i=1,\!...,m  \\
				\left\lfloor \begin{array}{l}
					p_{2,i,n} = \prox\nolimits_{\sigma_i g_i^{*}}\left(v_{i,n}+\alpha_n(v_{i,n}-v_{i,n-1}) +\frac{\sigma_i}{2} L_i w_{1,n} - \sigma_i r_i \right) \\
					w_{2,i,n} = 2 p_{2,i,n} - v_{i,n} -\alpha_n(v_{i,n}-v_{i,n-1})
				\end{array} \right.\\
		z_{1,n} = w_{1,n} - \frac{\tau}{2} \sum_{i=1}^m L_i^* w_{2,i,n} \\
		x_{n+1} = x_n +\alpha_n(x_n-x_{n-1}) + \lambda_n ( z_{1,n} - p_{1,n} ) \\
		\text{For }i=1,\!...,m  \\
				\left\lfloor \begin{array}{l}
					z_{2,i,n} = \prox\nolimits_{\sigma_i l_i^{*}}\left(w_{2,i,n} + \frac{\sigma_i}{2}L_i (2 z_{1,n} - w_{1,n}) \right) \\
					v_{i,n+1} = v_{i,n}+  \alpha_n(v_{i,n}-v_{i,n-1}) + \lambda_n (z_{2,i,n} - p_{2,i,n}).
				\end{array} \right. \\ \vspace{-4mm}
		\end{array}
		\right.
		\end{array}
	\end{align}
\end{algorithm}

\begin{theorem}\label{dr-thm_convex} In Problem \ref{dr_p1_convex}, suppose that 
\begin{align}\label{dr_zin_convex}
	z \in \ran \bigg( \partial f +  \sum_{i=1}^m L_i^*(\partial g_i \Box \partial l_i)(L_i \cdot -r_i) \bigg),
\end{align}
and consider the sequences generated by Algorithm \ref{dr_alg1_convex}. Then there exists $(\bx, \bv_1,\!...,\bv_m) \in \h \times \g_1 \, ... \times \g_m$ such that the following statements are true:
\begin{enumerate} 
	\item[(i)] By setting 
	\begin{align*}
	\bp_1 &= \prox\nolimits_{\tau f}\left( \bx - \frac{\tau}{2} \sum_{i=1}^m L_i^*\bv_{i} + \tau z \right), \\
	\bp_{2,i} &= \prox\nolimits_{\sigma_i g_i^{*}}\left(\bv_{i}+\frac{\sigma_i}{2} L_i (2\bp_1-\bx) - \sigma_i r_i \right),\ i=1,\!...,m,
	\end{align*}
	the element $(\bp_1,\bp_{2,1},\!...,\bp_{2,m}) \in \h \times \g_1 \times \!... \times \g_m$ is a primal-dual solution to 
	Problem \ref{dr_p1_convex}, hence $\bp_1$ is an optimal solution to $(P)$ and $(\bp_{2,1},\!...,\bp_{2,m})$ is an optimal solution to $(D)$;
	\item[(ii)] $\sum_{n\in\N}\|x_{n+1}-x_n\|^2<+\infty$, and $\sum_{n\in\N}\|v_{i,n+1}-v_{i,n}\|^2<+\infty$, $i=1,\!...,m$; 
	\item[(iii)] $(x_n,v_{1,n},\!...,v_{m,n})_{n\in\N}$ converges weakly to $(\bx, \bv_1,\!...,\bv_m)$;
	\item[(iv)] $(p_{1,n}-z_{1,n},p_{2,1,n}-z_{2,1,n},\!...,p_{2,m,n}-z_{2,m,n})\rightarrow 0$ as $n\rightarrow+\infty$;
	\item[(v)] $(p_{1,n},p_{2,1,n},\!...,p_{2,m,n})_{n\geq 1}$ converges weakly to $(\bp_1,\bp_{2,1},\!...,\bp_{2,m})$;
	\item[(vi)] $(z_{1,n},z_{2,1,n},\!...,z_{2,m,n})_{n\geq 1}$ converges weakly to $(\bp_1,\bp_{2,1},\!...,\bp_{2,m})$;
	\item[(vii)] if $f$ and $g_i^*$, $i=1,\!...,m$, are uniformly convex, then $(p_{1,n},p_{2,1,n},\!...,p_{2,m,n})_{n\geq 1}$ and $(z_{1,n},z_{2,1,n},\!...,z_{2,m,n})_{n\geq 1}$ converge strongly to the 
	unique primal-dual solution \newline $(\bp_1,\bp_{2,1},\!...,\bp_{2,m})$ to Problem \ref{dr_p1_convex}.                    
\end{enumerate}
\end{theorem}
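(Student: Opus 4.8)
The plan is to recognize Theorem \ref{dr-thm_convex} as the specialization of Theorem \ref{dr-thm} obtained by choosing the maximally monotone operators to be subdifferentials. First I would set $A=\partial f$, $B_i=\partial g_i$ and $D_i=\partial l_i$ for $i=1,\!...,m$. Since $f\in\Gamma(\h)$ and $g_i,l_i\in\Gamma(\g_i)$, each of these subdifferentials is maximally monotone (see \cite{rock}), so Problem \ref{dr_p1} applies verbatim; the substitution turns \eqref{dr_opt-p}--\eqref{dr_opt-d} into \eqref{dr_opt-po}--\eqref{dr_opt-do} and the assumed range condition \eqref{dr_zin_convex} into exactly \eqref{dr_zin}.

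The key step is to identify the resolvents appearing in Algorithm \ref{dr_alg1} with the proximal operators appearing in Algorithm \ref{dr_alg1_convex}. For the primal variable this is immediate: $J_{\tau A}=J_{\tau\partial f}=\prox_{\tau f}$. For the dual variables I would use the inverse--conjugate rule $(\partial g_i)^{-1}=\partial g_i^*$ and $(\partial l_i)^{-1}=\partial l_i^*$, which yields $J_{\sigma_i B_i^{-1}}=J_{\sigma_i\partial g_i^*}=\prox_{\sigma_i g_i^*}$ and $J_{\sigma_i D_i^{-1}}=J_{\sigma_i\partial l_i^*}=\prox_{\sigma_i l_i^*}$. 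With these identifications Algorithm \ref{dr_alg1_convex} is literally a particular instance of Algorithm \ref{dr_alg1}, so Theorem \ref{dr-thm} applies to the sequences it generates. This resolvent-to-prox identification for the dual blocks, where the rule $(\partial g_i)^{-1}=\partial g_i^*$ must be used correctly, is the only point requiring genuine care; everything else is a direct transfer.

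With that in place, statements (ii)--(vi) carry over verbatim from the corresponding items of Theorem \ref{dr-thm}. For (i) I would invoke Theorem \ref{dr-thm}(i) to conclude that $(\bp_1,\bp_{2,1},\!...,\bp_{2,m})$ is a primal-dual solution to Problem \ref{dr_p1} in the present subdifferential setting, i.e.\ it satisfies \eqref{dr_operator-proof-conditions-full-0}. By the discussion preceding the theorem, which characterizes the primal-dual solutions of \eqref{dr_opt-po}--\eqref{dr_opt-do} and invokes strong duality for the pair $(P)$--$(D)$, this gives that $\bp_1$ is optimal for $(P)$, that $(\bp_{2,1},\!...,\bp_{2,m})$ is optimal for $(D)$, and that $v(P)=v(D)$.

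Finally, for (vii) I would translate uniform convexity of the functions into uniform monotonicity of the operators: uniform convexity of $f$ implies uniform monotonicity of $\partial f=A$, and uniform convexity of $g_i^*$ implies uniform monotonicity of $\partial g_i^*=(\partial g_i)^{-1}=B_i^{-1}$, both via \cite[Example 22.3]{bauschke-book}. Hence the hypotheses of Theorem \ref{dr-thm}(vii) are satisfied, and its conclusion delivers the claimed strong convergence of $(p_{1,n},p_{2,1,n},\!...,p_{2,m,n})_{n\geq 1}$ and $(z_{1,n},z_{2,1,n},\!...,z_{2,m,n})_{n\geq 1}$ to the unique primal-dual solution. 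I do not anticipate any real obstacle beyond bookkeeping, since the entire argument is an application of the already-established Theorem \ref{dr-thm} once the subdifferential and resolvent identifications are made explicit.
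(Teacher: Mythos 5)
Your proposal is correct and coincides with the paper's own (implicit) argument: the paper states Theorem \ref{dr-thm_convex} as a direct specialization of Theorem \ref{dr-thm} obtained by taking $A=\partial f$, $B_i=\partial g_i$, $D_i=\partial l_i$, identifying $J_{\tau\partial f}=\prox_{\tau f}$ and $J_{\sigma_i\partial g_i^*}=\prox_{\sigma_i g_i^*}$ via $(\partial g_i)^{-1}=\partial g_i^*$, and using that uniform convexity of $f$ and $g_i^*$ yields uniform monotonicity of $A$ and $B_i^{-1}$. All the identifications you single out, including the strong-duality step for part (i), are exactly the ones the paper relies on.
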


We refer the reader to \cite{b-h2,combettes-pesquet} for qualification conditions which guarantee that the inclusion in 
\eqref{dr_zin_convex} holds. Finally, let us mention that for $i=1,...,m$, the function $g_i^*$ is uniformly convex if it is 
$\alpha_i$-strongly convex for $\alpha_i>0$ and this is the case if and only if $g_i$ is Fr\'{e}chet-differentiable with 
$\alpha_i^{-1}$-Lipschitz gradient (see \cite[Theorem 18.15]{bauschke-book}). 

\section{Numerical experiments}\label{sec5}
\subsection{Clustering}\label{sec5.1}
In cluster analysis one aims for grouping a set of points such that points within the same group are more similar to each other than to points in other groups. By taking into account \cite{ChiLan13,LinOhlLju11}, clustering can be formulated as the convex optimization problem
\begin{align}
	\label{sec4_p_cluster}
	\inf_{x_i \in \R^n,\,i=1,\ldots,m} \left\{\frac{1}{2} \sum_{i=1}^m\|x_i-u_i\|^2 + \gamma \sum_{i<j} \omega_{ij}\|x_i-x_j\|_p\right\},
\end{align}
where $\gamma \in \R_{+}$ is a tuning parameter, $p\in\{1,2\}$ and $\omega_{ij}\in\R_+$ represent weights on the terms $\|x_i-x_j\|_p$, for $i,\,j\in\{1,\ldots,m\}$, $i<j$. For each given point $u_i\in\R^n$, $i=1,\ldots,m$, the variable $x_i\in\R^n$ represents the associated cluster center. Since the objective function is strongly convex, there exists a unique solution to \eqref{sec4_p_cluster}.

\begin{figure}[b]
	\floatbox[{\capbeside\thisfloatsetup{capbesideposition={right,top},capbesidewidth=0.55\textwidth}}]{figure}[\FBwidth]
	{\includegraphics*[viewport= 184 303 458 499, width=0.35\textwidth]{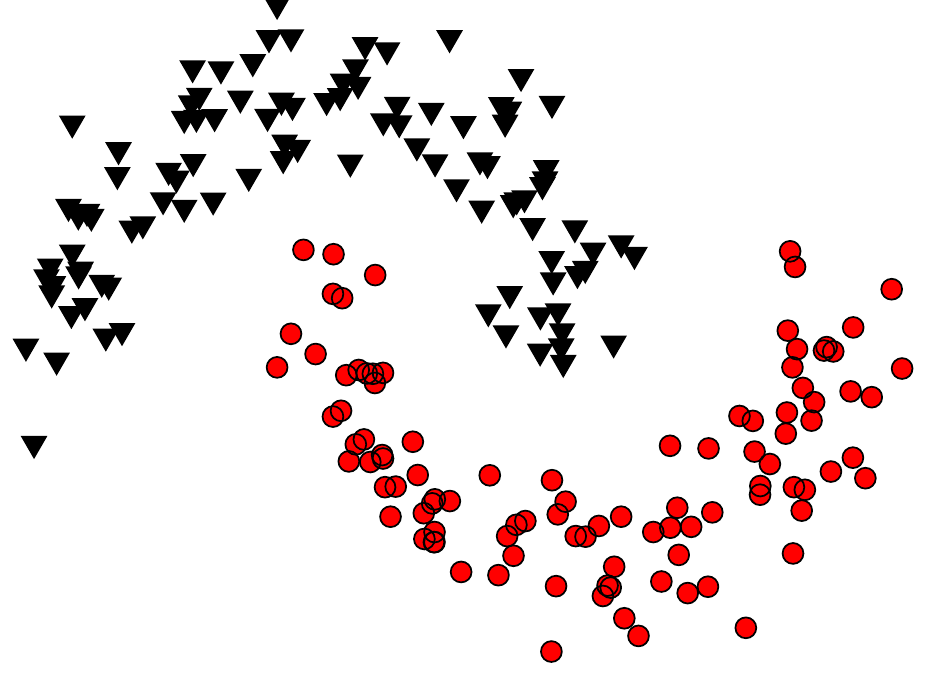}}
	{\caption{Clustering two interlocking half moons. The colors (resp. the shapes) show the correct affiliations.}
	\label{sec4_fig:clusters}}
\end{figure}

The tuning parameter $\gamma \in \R_{+}$ plays a central role within the clustering problem. Taking $\gamma=0$, each cluster center $x_i$ will coincide with the associated point $u_i$. As $\gamma$ increases, the cluster centers will start to coalesce, where two points $u_i$, $u_j$ are said to belong to the same cluster when $x_i=x_j$. One finally obtains a single cluster containing all points when $\gamma$ becomes sufficiently large.

In addition to this, the choice of the weights is important as well, since cluster centers may coalesce immediately as $\gamma$ passes certain critical values. In terms of our weight selection, we use a $K$-nearest neighbors strategy, as proposed in \cite{ChiLan13}. Therefore, whenever $i,\,j\in\{1,\ldots,m\}$, $i<j$, we set the weight to $\omega_{ij}=\iota_{ij}^K \exp(-\phi\|x_i-x_j\|_2^2)$, where
$$ \iota_{ij}^K = \left\{\begin{array}{ll} 1, & \text{if }j \text{ is among }i \text{'s }K\text{-nearest neighbors or vice versa,} \\
																					 0, &\text{otherwise.} \end{array}\right.$$
We use the values $K=10$ and $\phi=0.5$, which are the best ones reported in \cite{ChiLan13} on a similar dataset.

\begin{table}[tb]
	\centering
	\setlength{\tabcolsep}{8pt}
		\begin{tabular}{lllll}
		\toprule
		  & \multicolumn{2}{c}{$p=2$, $\gamma=5.2$} & \multicolumn{2}{c}{$p=1$, $\gamma=4$} \\ 			\cmidrule{2-5}
		     & $\varepsilon=10^{-4}$ & $\varepsilon=10^{-8}$  &  $\varepsilon=10^{-4}$ & $\varepsilon=10^{-8}$ \\ \cmidrule{1-5}
		Algorithm \ref{dr_alg1_convex} & $0.65 \text{s}\ (175)$ 		& $1.36 \text{s}\ (371)$ 	& $0.63 \text{s}\ (176)$ & $1.27 \text{s}\ (374)$ \\
		DR \cite{b-h2} 				& $0.78 \text{s}\ (216)$ 		& $1.68 \text{s}\ (460)$ 	& $0.78 \text{s}\ (218)$ & $1.68 \text{s}\ (464)$ \\
		FB \cite{vu}    			& $2.48 \text{s}\ (1353)$  & $5.72 \text{s}\ (3090)$  & $2.01 \text{s}\ (1092)$& $4.05 \text{s}\ (2226)$ \\
		FB Acc \cite{b-c-h2} 	& $2.04 \text{s}\ (1102)$  & $4.11 \text{s}\ (2205)$  & $1.74 \text{s}\ (950)$ & $3.84 \text{s}\ (2005)$ \\		
		FBF \cite{combettes-pesquet}    &  $7.67 \text{s}\ (2123)$  &  $17.58 \text{s}\ (4879)$  &  $6.33 \text{s}\ (1781)$  &  $13.22 \text{s}\ (3716)$ \\
		FBF Acc \cite{b-h}		& $5.05 \text{s}\ (1384)$  & $10.27 \text{s}\ (2801)$ & $4.83 \text{s}\ (1334)$ & $9.98 \text{s}\ (2765)$  \\
		PD	\cite{ch-pck}			&	$1.48 \text{s}\ (780)$   & $3.26 \text{s}\ (1708)$ 	& $1.44 \text{s}\ (772)$  & $3.18 \text{s}\ (1722)$  \\
		PD Acc	\cite{ch-pck}	& $1.28 \text{s}\ (671)$   & $3.14 \text{s}\ (1649)$ 	& $1.23 \text{s}\ (665)$  & $3.12 \text{s}\ (1641)$  \\
		Nesterov \cite{Nes05b}	& $7.85 \text{s}\ (3811)$  & $42.69 \text{s}\ (21805)$ & $7.46 \text{s}\ (3936)$ & $>190 \text{s}\ (>100000)$ \\
		FISTA \cite{BecTeb09} & $7.55 \text{s}\ (4055)$  & $51.01 \text{s}\ (27356)$ & $6.55 \text{s}\ (3550)$ & $47.81 \text{s}\ (26069)$ \\
		\bottomrule
	\end{tabular}	
	\caption{\small Performance evaluation for the clustering problem. The entries refer to the CPU times in seconds and the number of iterations, respectively, needed in order to attain a root mean squared error for the iterates below the tolerance $\varepsilon$.}
	\label{sec4_table:clustering}
\end{table}

Let $k$ be the number of nonzero weights $\omega_{ij}$. Then, one can introduce a linear operator $A:\R^{mn} \rightarrow \R^{kn}$, such that problem \eqref{sec4_p_cluster} can be equivalently written as
\begin{align}
	\label{sec4_p_reformulated}
	\inf_{x \in \R^{mn}} \left\{h(x) + g(Ax)\right\}, 
\end{align}
the function $h$ being $1$-strongly convex and differentiable with $1$-Lipschitz continuous gradient. Also, by taking $p\in\{1,2\}$, the proximal points with respect to $g^*$ are known to be available via explicit formulae.

For our numerical tests we consider the standard data set consisting of two interlocking half moons in $\R^2$, each of them being composed of $100$ points (see Figure \ref{sec4_fig:clusters}). 
The stopping criterion asks the root-mean-square error (RMSE) to be less than or equal to a given bound $\varepsilon$ which is either $\varepsilon=10^{-4}$ or $\varepsilon=10^{-8}$. 
As tuning parameters we use $\gamma=4$ for $p=1$ and $\gamma=5.2$ for $p=2$ since both choices lead to a correct separation of the input data into the two half moons.

Given Table \ref{sec4_table:clustering}, it shows that Algorithm \ref{dr_alg1_convex} performs better than the noninertial Douglas--Rachford (DR) method proposed in \cite[Algorithm 2.1]{b-h2}. 
One can also see that the inertial Douglas--Rachford algorithm is faster than other popular primal-dual solvers, among them the forward-backward-forward (FBF) method from \cite{combettes-pesquet}, 
and the forward-backward (FB) method from \cite{vu}, where in both methods the function $h$ is processed via a forward step. The accelerated versions of the latter and of the primal-dual (PD) method from \cite{ch-pck} converge in less time than their regular variants, but are still slower than Algorithm \ref{dr_alg1}. Notice that the methods called Nesterov and FISTA are accelerated proximal gradient algorithms which are applied to the Fenchel dual problem to \eqref{sec4_p_reformulated}.

\subsection{The generalized Heron problem}

In the sequel we investigate the \textit{generalized Heron problem} which has been recently investigated in \cite{MorNamSal12a,MorNamSal12b} and where for its solving subgradient-type methods have been proposed.

While the \textit{classical Heron problem} concerns the finding of a point $\bu$ on a given straight line in the plane such that the sum of its distances  to two given points is minimal, the problem that we address here aims to find a point in a closed convex set $\Omega \subseteq \R^n$ which minimizes the sum of the distances to given convex closed sets $\Omega_i \subseteq \R^n$, $i=1,\ldots,m$.

\begin{figure}[tb]
	\centering
	\captionsetup[subfigure]{position=top}
	\subfloat[Problem with optimizer]{\includegraphics*[viewport=138 207 481 590,width=0.32\textwidth]{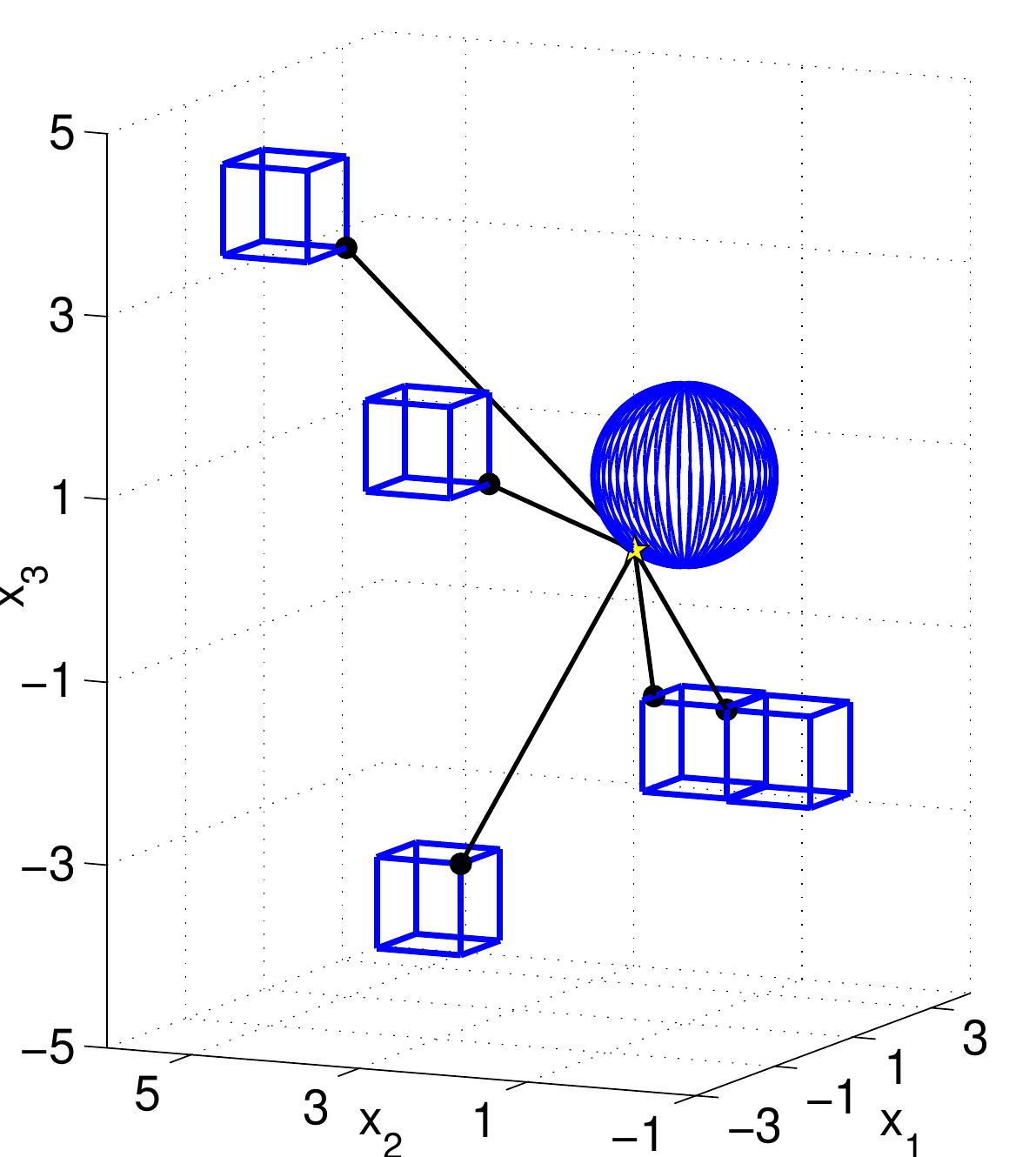}} \hspace{0.8cm}
	\subfloat[Progress of the RMSE values]{\includegraphics*[width=0.48\textwidth]{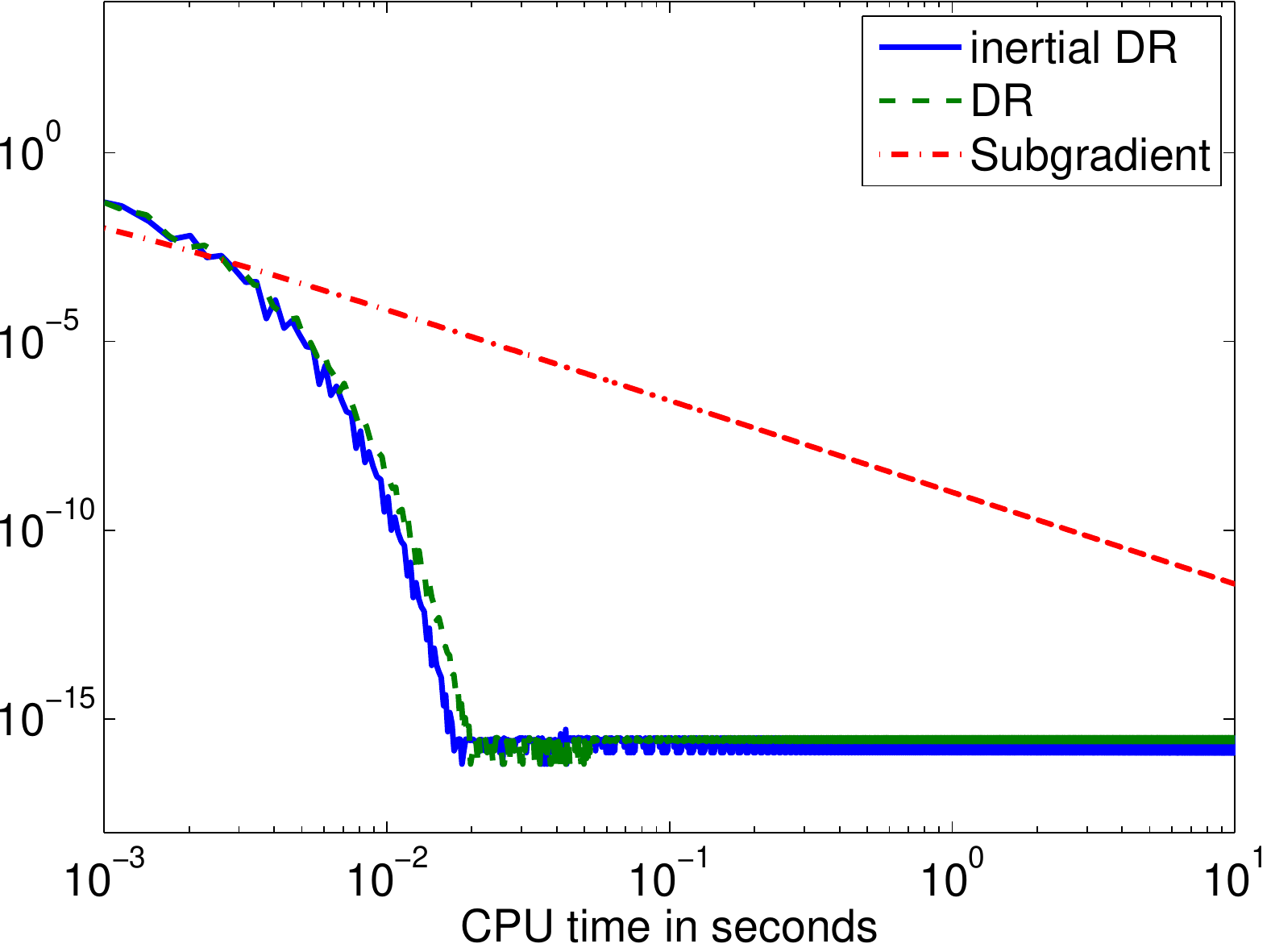}}
	\caption{\small Generalized Heron problem with cubes and ball constraint set on the left-hand side, performance evaluation for the RMSE on the right-hand side.}
	\label{dr_fig:ex1}	
\end{figure}

The distance function from a point $x \in \R^n$ to a nonempty set $\Omega \subseteq \R^n$ is defined as
\begin{align*}
	d(x;\Omega)=(\|\cdot\| \Box \delta_{\Omega})(x) = \inf_{z\in\Omega}\|x-z\|.
\end{align*}
Thus the \textit{generalized Heron problem} reads
\begin{align}
	\label{dr_ex-p1}
	\inf_{x\in\Omega} \sum_{i=1}^m d(x; \Omega_i) ,
\end{align}
where the sets $\Omega \subseteq \R^n$ and $\Omega_i \subseteq \R^n$, $i=1,\ldots,m$, are assumed to be nonempty, closed and convex. We observe that \eqref{dr_ex-p1} perfectly fits into the framework considered in Problem \ref{dr_p1_convex} when setting
\begin{align}
	\label{dr_ex-f1}
	f=\delta_{\Omega}, \text{ and } g_i=\|\cdot\|,\ l_i=\delta_{\Omega_i} \text{ for all }i=1,\ldots,m.
\end{align}
However, note that \eqref{dr_ex-p1} cannot be solved via the primal-dual methods in \cite{combettes-pesquet} and \cite{vu}, which require for each $i=1,\!...,m,$ that either $g_i$ or $l_i$ is strongly convex, unless one 
substantially increases the number of primal and dual variables. Notice that
$$g_i^*:\R^n \rightarrow \overline{\R},\ g_i^*(p)=\sup_{x\in\R^n}\left\{\<p,x\> - \|x\|\right\} = \delta_{\overline{B}(0,1)}(p),\ i=1,\ldots,m,$$
where ${\overline{B}(0,1)}$ denotes the closed unit ball,
 thus the proximal points of $f$, $g_i^*$ and $l_i^*$, $i=1,\ldots,m,$ can be calculated via projections, in case of the latter via Moreau's decomposition formula \eqref{prox-f-star}.

In the following we solve a number of random problems where the closed convex set $\Omega\subseteq \R^n$ will always be the unit ball centered at $(1,\!...,1)^T$. 
The sets $\Omega_i\subseteq\R^n$, $i=1,\!...,m$, are boxes in right position (i.\,e., the edges are parallel to the axes) with side length $1$. The box centers are created via independent identically distributed 
Gaussian entries from $\mathcal{N}(0,n^2)$ where the random seed in Matlab is set to $0$. After determining a solution, the stopping criterion asks the root-mean-square error (RMSE) to be less than or equal to a given bound $\varepsilon$. 

Table \ref{sec4_table:heron} shows a comparison between Algorithm \ref{dr_alg1_convex}, the Douglas--Rachford type method from \cite[Algorithm 3.1]{b-h2}, and the 
subgradient approach described in \cite{MorNamSal12a,MorNamSal12b} when applied to different instances of the generalized Heron problem. One such particular case is displayed in 
Figure \ref{dr_fig:ex1} when $n=3$ and $m=5$, while the evolution of the RMSE values is given there in more detail. Empty cells in Table \ref{sec4_table:heron} indicate that it took more than $60$ 
seconds to pass the stopping criterion. Based on the provided data, one can say that both Algorithm \ref{dr_alg1_convex} and the noninertial Douglas--Rachford type method are performing well in this example and that differences in the computational performance are almost negligible. However, one very interesting observation arises when the dimension of the space is set to $n=3$, as the subgradient approach then becomes better and surpasses both primal-dual methods. 

\begin{table}[tb]
	\footnotesize
	\centering
	\setlength{\tabcolsep}{6pt}
		\begin{tabular}{lllllll}
		\toprule
		  & \multicolumn{2}{c}{Algorithm \ref{dr_alg1_convex}} & \multicolumn{2}{c}{Douglas--Rachford, \cite{b-h2}} & \multicolumn{2}{c}{Subgradient, \cite{MorNamSal12a,MorNamSal12b}}\\ \cmidrule{2-7}
		     & $\varepsilon=10^{-5}$ & $\varepsilon=10^{-10}$  &  $\varepsilon=10^{-5}$ & $\varepsilon=10^{-10}$ & $\varepsilon=10^{-5}$ & $\varepsilon=10^{-10}$ \\ \cmidrule{1-7}
		$n=2,\ m=5$  				& $0.01 \text{s}\ (33)$ 		& $0.03 \text{s}\ (72)$ 	& $0.01 \text{s}\ (30)$ 		& $0.03 \text{s}\ (63)$ & --		& -- \\
		$n=2,\ m=10$  				& $0.01 \text{s}\ (21)$ 		& $0.03 \text{s}\ (59)$ 	& $0.01 \text{s}\ (21)$ 		& $0.02 \text{s}\ (43)$ & $0.01 \text{s}\ (8)$ 		& $0.03 \text{s}\ (120)$ \\		
		$n=2,\ m=20$  				& $0.06 \text{s}\ (295)$ 		& $0.11 \text{s}\ (522)$ 	& $0.11 \text{s}\ (329)$ 		& $0.19 \text{s}\ (583)$ & $0.05 \text{s}\ (204)$ 		& $16.78 \text{s}\ (69016)$ \\
		$n=2,\ m=50$  				& $0.18 \text{s}\ (517)$ 		& $0.45 \text{s}\ (1308)$ 	& $0.22 \text{s}\ (579)$ 		& $0.55 \text{s}\ (1460)$ & $0.04 \text{s}\ (152)$ 		& $4.82 \text{s}\ (19401)$ \\
		\cmidrule{1-7}
		$n=3,\ m=5$  		& $0.01 \text{s}\ (16)$ 		& $0.01 \text{s}\ (37)$ 	& $0.01 \text{s}\ (16)$ 		& $0.01 \text{s}\ (33)$ & $0.02 \text{s}\ (70)$ 		& $2.17 \text{s}\ (8081)$ \\
		$n=3,\ m=10$  				& $0.01 \text{s}\ (37)$ 		& $0.03 \text{s}\ (91)$ 	& $0.01 \text{s}\ (41)$ 		& $0.03 \text{s}\ (101)$ & $0.01 \text{s}\ (11)$ 		& $0.03 \text{s}\ (199)$ \\		
		$n=3,\ m=20$  				& $0.01 \text{s}\ (22)$ 		& $0.03 \text{s}\ (52)$ 	& $0.01 \text{s}\ (25)$ 		& $0.03 \text{s}\ (59)$ & $0.01 \text{s}\ (6)$ 		& $0.01 \text{s}\ (32)$ \\
		$n=3,\ m=50$  				& $0.01 \text{s}\ (19)$ 		& $0.02 \text{s}\ (44)$ 	& $0.01 \text{s}\ (21)$ 		& $0.02 \text{s}\ (51)$ & $0.01 \text{s}\ (10)$ 		& $0.01 \text{s}\ (17)$ \\
		\bottomrule
	\end{tabular}	
	\caption{\small Performance evaluation for the Heron problem. The entries refer to the CPU times in seconds and the number of iterations, respectively, needed in order to attain a root-mean-square error lower than the tolerance $\varepsilon$.}
	\label{sec4_table:heron}
\end{table}


\begin{thebibliography}{99}

\bibitem{alvarez2000} F. Alvarez, {\it On the minimizing property of a second order dissipative system in Hilbert spaces}, SIAM Journal
on Control and Optimization 38(4), 1102--1119, 2000

\bibitem{alvarez2004} F. Alvarez, {\it Weak convergence of a relaxed and inertial hybrid projection-proximal point algorithm for
maximal monotone operators in Hilbert space}, SIAM Journal on Optimization 14(3), 773--782, 2004

\bibitem{alvarez-attouch2001} F. Alvarez, H. Attouch, {\it An inertial proximal method for maximal monotone operators via discretization
of a nonlinear oscillator with damping}, Set-Valued Analysis 9, 3--11, 2001

\bibitem{att-peyp-red} H. Attouch, J. Peypouquet, P. Redont, {\it A dynamical approach to an inertial forward-backward algorithm 
for convex minimization}, SIAM Journal on Optimization 24(1), 232--256, 2014

\bibitem{at-th} H. Attouch, M.\ Th\'era, {\it A general duality principle for the sum of two operators},
Journal of Convex Analysis 3, 1--24, 1996

\bibitem{bauschke-book} H.H. Bauschke, P.L. Combettes, {\it Convex Analysis and Monotone Operator Theory in Hilbert Spaces}, CMS Books in Mathematics, Springer, New York, 2011

\bibitem{BecTeb09}
A. Beck and M. Teboulle, {\it A fast iterative shrinkage-tresholding algorithm for linear inverse problems}, SIAM J. Imaging Sci. 2(1), 183--202, 2009

\bibitem{bo-van} J.M. Borwein and J.D. Vanderwerff, {\it Convex Functions: Constructions, Characterizations
and Counterexamples}, Cambridge University Press, Cambridge, 2010

\bibitem{b-hab} R.I. Bo\c t, {\it Conjugate Duality in Convex Optimization}, Lecture Notes in Economics and Mathematical Systems, Vol. 637, Springer, Berlin Heidelberg, 2010

\bibitem{bot-csetnek} R.I. Bo\c t, E.R. Csetnek, {\it Regularity conditions via generalized interiority notions in convex optimization: new achievements and their relation to some classical statements}, Optimization 61(1), 35--65, 2012

\bibitem{b-c-inertial} R.I. Bo\c t, E.R. Csetnek, {\it An inertial forward-backward-forward primal-dual splitting algorithm for solving monotone 
inclusion problems}, arXiv:1402.5291, 2014

\bibitem{b-c-h1} R.I. Bo\c t, E.R. Csetnek, A. Heinrich, {\it A primal-dual splitting algorithm for finding zeros of sums of
maximally monotone operators}, SIAM Journal on Optimization, 23(4), 2011--2036, 2013

\bibitem{b-c-h2} R.I. Bo\c t, E.R. Csetnek, A. Heinrich, C. Hendrich, {\it On the convergence rate improvement of a primal-dual splitting
algorithm for solving monotone inclusion problems}, Mathematical Programming, DOI 10.1007/s10107-014-0766-0

\bibitem{b-h} R.I. Bo\c t, C. Hendrich, {\it Convergence analysis for a primal-dual monotone + skew splitting algorithm with applications
to total variation minimization}, Journal of Mathematical Imaging and Vision, DOI: 10.1007/s10851-013-0486-8

\bibitem{b-h2} R.I. Bo\c t, C. Hendrich, {\it A Douglas--Rachford type primal-dual method for solving inclusions with mixtures of composite
and parallel-sum type monotone operators}, SIAM Journal on Optimization 23(4), 2541--2565, 2013

\bibitem{br-combettes} L.M. Brice\~{n}o-Arias, P.L. Combettes, {\it
A monotone + skew splitting model for composite monotone inclusions in duality},
SIAM Journal on Optimization 21(4), 1230--1250, 2011

\bibitem{cabot-frankel2011} A. Cabot, P. Frankel, {\it Asymptotics for some proximal-like method involving inertia and
memory aspects}, Set-Valued and Variational Analysis 19, 59--74, 2011

\bibitem{ch-pck} A. Chambolle, T. Pock, {\it A first-order primal-dual algorithm for convex problems with applications to imaging}, Journal of Mathematical Imaging and Vision 40(1), 120--145, 2011

\bibitem{ChiLan13} E.C. Chi, K. Lange, {\it Splitting methods for convex clustering}, arXiv:1304.0499 [stat.ML], 2013

\bibitem{combettes} P.L. Combettes, {\it Solving monotone inclusions via compositions of nonexpansive averaged operators}, Optimization 53(5-6), 475--504, 2004

\bibitem{combettes-pesquet} P.L. Combettes, J.-C. Pesquet, {\it Primal-dual splitting algorithm for solving inclusions with mixtures of composite, Lipschitzian, and parallel-sum type monotone operators}, Set-Valued and Variational Analysis 20(2), 307--330, 2012

\bibitem{condat2013} L. Condat, {\it A primal-dual splitting method for convex optimization involving Lipschitzian, proximable and linear composite
terms}, Journal of Optimization Theory and Applications 158(2), 460--479, 2013

\bibitem{dor} J. Douglas, H.H. Rachford, {\it On the numerical solution of the heat conduction problem in $2$ and $3$ space variables}, Transactions of the
    American Mathematical Society 82, 421--439, 1956

\bibitem{EkTem} I. Ekeland, R. Temam, {\it Convex Analysis and Variational Problems}, North-Holland Publishing
Company, Amsterdam, 1976


\bibitem{LinOhlLju11} F. Lindsten, H. Ohlsson, L. Ljung, {\it Just relax and come clustering! A convexication of k-means clustering}, Technical Report, Link\"opings universitet, 2011

\bibitem{lions-mercier} P.L. Lions, B. Mercier, {\it Splitting algorithms for the sum of two nonlinear operators}, 
SIAM Journal on Numerical Analysis 16(6), 964--979, 1979

\bibitem{mainge2008} P.-E. Maing\'{e}, {\it Convergence theorems for inertial KM-type algorithms}, Journal of Computational
and Applied Mathematics 219, 223--236, 2008

\bibitem{mainge-moudafi2008} P.-E. Maing\'{e}, A. Moudafi, {\it Convergence of new inertial proximal methods for dc programming},
SIAM Journal on Optimization 19(1), 397--413, 2008

\bibitem{MorNamSal12a}
B.S. Mordukhovich, N.M. Nam and J. Salinas, {\it Solving a generalized Heron problem by means of convex analysis}, American Mathematical Monthly 119(2), 87--99, 2012

\bibitem{MorNamSal12b}
B.S. Mordukhovich, N.M. Nam and J. Salinas, {\it Applications of variational analysis to a generalized Heron problem}, Applicable Analysis 91(10), 1915--1942, 2012

\bibitem{moudafi-oliny2003} A. Moudafi, M. Oliny, {\it Convergence of a splitting inertial proximal method for monotone
operators}, Journal of Computational and Applied Mathematics 155, 447--454, 2003

\bibitem{Nes05b} Y. Nesterov, {\it Smooth minimization of non-smooth functions}, Mathematical Programming 103(1), 127--152, 2005

\bibitem{rock} R.T. Rockafellar, {\it On the maximal monotonicity of subdifferential mappings}, Pacific Journal of
Mathematics 33(1), 209--216, 1970

\bibitem{rock-prox} R.T. Rockafellar, {\it Monotone operators and the proximal point algorithm}, SIAM Journal on Control and Optimization 14(5), 877--898, 1976

\bibitem{simons} S. Simons, {\it From Hahn-Banach to Monotonicity}, Springer, Berlin, 2008



\bibitem{vu} B.C. V\~{u}, {\it A splitting algorithm for dual monotone inclusions
involving cocoercive operators}, Advances in Computational Mathematics 38(3), 667--681, 2013

\bibitem{Zal-carte} C. Z\u alinescu, {\it Convex Analysis in General Vector Spaces}, World Scientific, Singapore, 2002

\end{thebibliography}
\end{document}